%% file: manuscript_revised_2.tex
\documentclass[11pt]{article}
\usepackage[margin=1in]{geometry}
\usepackage[T1]{fontenc}
\usepackage{lmodern,microtype}
\usepackage{amsmath,amssymb,amsthm,mathtools}
\usepackage{enumitem,booktabs,graphicx}
\usepackage{placeins}
\usepackage{needspace}

\usepackage[numbers,sort&compress]{natbib}
\usepackage[hidelinks]{hyperref}
\usepackage{xcolor}
\usepackage{etoolbox}
\usepackage{authblk}
\usepackage{orcidlink}

\usepackage{todonotes}

\definecolor{revision}{RGB}{25,65,155}

\newcommand{\defeq}{\mathrel{\coloneqq}}
\hypersetup{pdftitle={Relative Primal-Dual Gap Certificates for Operator-Composite Trust-Region Methods},
pdfauthor={Harbir Antil},
pdfsubject={Revised manuscript with four-lobe control experiments},
pdfkeywords={nonsmooth optimization, trust regions, primal-dual gap, operator composition}}
\numberwithin{equation}{section}
\newtheorem{theorem}{Theorem}[section]
\newtheorem{lemma}[theorem]{Lemma}
\newtheorem{proposition}[theorem]{Proposition}
\newtheorem{corollary}[theorem]{Corollary}
\theoremstyle{definition}
\newtheorem{definition}[theorem]{Definition}
\newtheorem{assumption}[theorem]{Assumption}

\newtheorem{algorithm}[theorem]{Algorithm}
\theoremstyle{remark}
\newtheorem{remark}[theorem]{Remark}
\DeclareMathOperator{\dom}{dom}
\DeclareMathOperator{\prox}{prox}

\DeclareMathOperator{\pred}{pred}

\newcommand{\R}{\mathbb R}
\newcommand{\pair}[2]{\langle #1,#2\rangle}
\newcommand{\norm}[1]{\lVert #1\rVert}
\newcommand{\ip}[2]{(#1,#2)_W}
\newcommand{\RW}{\mathcal R_W}
\newcommand{\dd}{\,\mathrm d}
\setlist[enumerate]{leftmargin=*,itemsep=3pt,topsep=4pt}
\setlist[itemize]{leftmargin=*,itemsep=3pt,topsep=4pt}

\usepackage{titlesec}
\titleformat{\section}{\normalfont\scshape\centering}{\thesection.}{0.5em}{}
\titleformat*{\subsection}{\itshape}
\titleformat*{\subsubsection}{\itshape}

 \providecommand{\keywords}[1]
 {
 	{\small\emph{Keywords:} #1}
 }

\title{\vspace{-10mm}\bf Relative Primal--Dual Gap Certificates\texorpdfstring{\\}{ }for Operator-Composite Trust-Region Methods
\thanks{This work is partially supported by the Office of Naval Research under Award No. N00014-24-1-2147, the National
Science Foundation under Grant DMS-2408877, and the Air Force Office of Scientific Research under Award No. FA9550-22-
1-0248}}
\author[1]{Harbir Antil\,\orcidlink{0000-0002-6641-1449}%
	\thanks{Email: \url{hantil@gmu.edu}}}
\date{}	
	\affil[1]{\small{Department of Mathematical Sciences and the Center for Mathematics and Artificial Intelligence (CMAI), George Mason University, Fairfax, VA 22030, USA}}

\begin{document}
\maketitle

\vspace{-1cm}

\begin{abstract}
We study trust-region minimization of a smooth, possibly nonconvex
functional plus a convex functional composed with a bounded linear
operator. A relative primal--dual gap condition controls both the error
in an approximate proximal-gradient step and its linear-model decrease.
Together with a computable absolute stationarity test, it yields a finite
Cauchy search, convergence of the proximal stationarity measure to zero,
and an $O(\varepsilon^{-2})$ bound on outer trials. The outer analysis
allows the linear operator to take values in a Banach space and does
not require dual attainment. When the operator takes values in a Hilbert space and the regularizer
is finite and Lipschitz, the dual proximal-gradient method produces finite
gaps tending to zero, provided the required proximal maps and functional
values can be evaluated. We prove $O(j^{-1})$ gap bounds for both
recovered and averaged primal candidates and give a sharper bound 
on the primal error for exactly recovered points. A semilinear elliptic 
control problem with unsmoothed total-variation regularization and 
an $L^2$ control cost illustrates the method in the full $H^1$ metric.  
Across five meshes, outer and state Newton counts remain constant,
while interior-point iteration counts vary mildly.
\end{abstract}
\keywords{Nonsmooth optimization; trust-region methods;
inexact proximal points; Fenchel duality; operator composition; total variation.}

\section{Introduction}
\label{sec:introduction}
Let $W$ be a real Hilbert space, let $X$ be a real Banach space, and let
$T:W\to X$ be bounded and linear. We consider
\begin{equation}\label{eq:problem}
 \min_{w\in W} F(w)\defeq f(w)+R(Tw),
\end{equation}
where $f$ is smooth and $R$ is proper, lower semicontinuous, and convex.
The smooth term need not be convex. Such problems arise in inverse problems
and partial differential equation (PDE) constrained optimization when a regularizer
acts on derivatives, observations, or groups of control variables. The
factorization matters computationally: the proximal map of $R$ or its
conjugate can be simple even when the proximal map of $R\circ T$ is expensive.

A proximal trust-region method uses a proximal-gradient step to obtain
sufficient model decrease. When this step is computed
iteratively, a stopping condition must guarantee the decrease needed by the
outer method. A small difference between consecutive inner iterates does
not by itself provide this guarantee. Moreover, for an extended-valued
regularizer, a point close to its effective domain need not belong to that
domain. A finite primal--dual gap must therefore be evaluated at a point
where the original proximal objective is finite.

This work is motivated by a posteriori error analysis based on duality
for convex variational problems, developed by Bartels and 
Kaltenbach and in joint work with the author and coauthors
\citep{BartelsKaltenbach2023,BartelsKaltenbach2024,AntilBartelsKaltenbachKhandelwal2025,
AntilKaltenbachKirk2026}. In that framework, the primal--dual gap is a
computable measure of the combined primal and dual errors, valid for
admissible approximations independently of how they were computed.
Here we apply this principle to the strongly convex proximal subproblem
inside a possibly nonconvex trust-region method. The outer objective need
not satisfy the convex duality relations used for this subproblem.

We use a relative primal--dual gap condition. At a current point $w$, write
$\nabla f(w)$ for the Hilbert gradient, choose $t>0$, and set
$q\defeq w-t\nabla f(w)$. For the proximal objective
$\Psi_{q,t}(y)\defeq\norm{y-q}_W^2/(2t)+R(Ty)$, a primal candidate $y=w+s$
and a dual candidate $\mu$ determine a gap $\mathcal G_{q,t}(y,\mu)$.
The condition
\begin{equation}\label{eq:relative_intro}
 \mathcal G_{q,t}(w+s,\mu)
 \le \frac{\sigma^2}{2t}\norm{s}_W^2,
 \qquad 0<\sigma<1,
\end{equation}
gives upper and lower bounds on the computed step in terms of the
exact step, and it guarantees decrease of the linear composite
model. Both conclusions are needed in the trust-region analysis.

Trust-region methods and their Cauchy decrease estimates are classical
\citep{ConnGouldToint2000}. Baraldi and Kouri
\citep{BaraldiKouri2023} allow inexact smooth evaluations in a proximal
trust-region method; Maia, Baraldi, and Kouri
\citep{MaiaBaraldiKouri2026} also permit inexact weighted proximal points.
Aravkin, Baraldi, and Orban \citep{AravkinBaraldiOrban2022} analyze
proximal quasi-Newton trust-region methods. A linear composition can be
included as the abstract convex term in these formulations.
Kouri \citep{Kouri2026} treats a broader structured, potentially nonlinear
composition and develops a dual inner method, while the corresponding
outer convergence analysis assumes exact composite proximal evaluations.

Relative proximal errors and dual gap conditions likewise predate this
work \citep{SolodovSvaiter1999,SalzoVilla2012,RaschChambolle2020}.
Villa, Salzo, Baldassarre, and Verri
\citep[Proposition~2.2 and Theorem~6.1]{VillaSalzoBaldassarreVerri2013}
derive error bounds from primal--dual gaps and recover primal points
from dual iterates for linear compositions. Their results also show that
the gaps of recovered primal points tend to zero for regularizers that
are finite everywhere. Bonettini and coauthors \citep{BonettiniEtAl2017}
use relative dual tests for composite proximal subproblems in nonconvex
line-search methods, and Lee and Wright \citep{LeeWright2019} analyze
inexact quadratic approximation. The a posteriori error identities explain our choice of certificate;
the inexact proximal literature provides closely related approximation
conditions and methods for constructing the candidate pairs. Both
connections rely on Fenchel duality.

We use computable primal--dual gap bounds to construct trust-region
steps. The relative condition gives the bounds on step norms and the
model decrease required for a finite Cauchy search. A separate absolute
test certifies that the stationarity tolerance is met, including at
stationary points. Together, these estimates prove convergence of the
stationarity measure to zero and the standard first-order bound on outer
trials for bounded, possibly indefinite model operators. The analysis 
does not require exact minimization of the trust-region model. 
For Lipschitz regularizers satisfying the evaluation assumptions
below, we prove gap estimates for recovered and averaged primal points
in Hilbert space.
We verify these assumptions for a semilinear elliptic control
problem with total-variation regularization and $L^2$ control cost, and 
prove existence of an optimal $H^1$ control on convex domains.

\paragraph{Scope.}
The outer theorem assumes that each requested proximal problem admits
an inner procedure whose gaps are finite, can be evaluated, and tend to zero.
We verify this assumption for finite Lipschitz regularizers with the stated
proximal and functional evaluations. For an indicator regularizer, a finite
gap requires feasibility of the tested primal point; the present method
does not allow approximate feasibility in that test. The analysis
assumes exact smooth evaluations and linear operations.

\section{Problem setting}
\label{sec:setting}
Let $X$ be a real Banach space and let $X^*$ be its continuous dual. The
Riesz map $\RW:W\to W^*$ is defined by
$\pair{\RW u}{v}=\ip{u}{v}$. The dual norm on $W^*$ is
$\norm{\ell}_{W^*}=\norm{\RW^{-1}\ell}_W$. We write
$T^*:X^*\to W^*$ for the Banach adjoint. Thus no Riesz identification is
made in $X$ unless it is explicitly assumed to be Hilbert.
For a proper convex functional $h$ on a Banach space, we use
$\ell\in\partial h(v)$ to mean that $h(v)$ is finite and
$h(z)\ge h(v)+\pair{\ell}{z-v}$ for every $z$.

\begin{assumption}\label{ass:basic}
The functional $R:X\to(-\infty,+\infty]$ is proper, convex, and norm lower
semicontinuous, and $\dom(R\circ T)\ne\emptyset$. The function $f:W\to\R$
is continuously Fr\'echet differentiable, and its Hilbert gradient $\nabla f$ is
globally Lipschitz with constant $L_f\ge0$. The objective $F$ is bounded
below on $\dom(R\circ T)$ by a finite number $F_{\rm low}$.
\end{assumption}

The gradient convention is
$\ip{\nabla f(w)}{v}=f'(w)[v]$ for $v\in W$, so
$\RW\nabla f(w)=f'(w)$. Global Lipschitz continuity supplies a
uniform bound for all segments and pairs used below.
Write $\varphi\defeq R\circ T$ when the factorization need not be displayed. This is proper,
convex, and lower semicontinuous. For $w\in\dom\varphi$ and $t>0$, define the 
proximal-gradient step
\begin{equation}\label{eq:exact_prox}
 p(t)=p(w;t)\defeq \arg\min_{s\in W}
 \left\{\ip{\nabla f(w)}{s}+\frac1{2t}\norm{s}_W^2+\varphi(w+s)\right\}.
\end{equation}

The convex-analytic facts used below are standard; see
\citet{BauschkeCombettes2017,EkelandTemam1999}. We recall these properties in our Hilbert-space notation.
\begin{lemma}[Properties of the proximal-gradient step]\label{lem:prox}
The step in \eqref{eq:exact_prox} exists uniquely and satisfies
$-f'(w)-t^{-1}\RW p(t)\in\partial\varphi(w+p(t))$.
For $0<t_1<t_2$,
\begin{equation}\label{eq:path_norms}
 \norm{p(t_1)}_W\le\norm{p(t_2)}_W,
 \qquad \frac{\norm{p(t_1)}_W}{t_1}\ge\frac{\norm{p(t_2)}_W}{t_2},
 \qquad \lim_{t\downarrow0}\norm{p(t)}_W=0.
\end{equation}
Moreover, $p(t)=0$ for one, equivalently every, $t>0$ if and only if
$-f'(w)\in\partial\varphi(w)$.
\end{lemma}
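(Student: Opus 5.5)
The plan is to reduce everything to the strongly convex proximal objective and to standard monotonicity arguments. Throughout, set $g\defeq\nabla f(w)$ and $\Phi_t(s)\defeq\ip{g}{s}+\norm{s}_W^2/(2t)+\varphi(w+s)$.

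\emph{Existence, uniqueness and optimality.}
Since $\varphi$ is proper, convex and lower semicontinuous on the Hilbert space $W$, it has a continuous affine minorant. Hence $\Phi_t$ is proper, lower semicontinuous, $1/t$-strongly convex and coercive. It is finite at $s=0$ because $w\in\dom\varphi$. Weak lower semicontinuity of convex lsc functionals, together with coercivity, gives a minimizer, and strict convexity gives uniqueness.

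For the inclusion I will avoid a sum rule and argue directly. Fix $z\in W$ and $\lambda\in(0,1)$, and compare $\Phi_t(p)$ with $\Phi_t(p+\lambda(z-w-p))$. Convexity of $\varphi$ bounds the $\varphi$-term, and the quadratic expands exactly. Dividing by $\lambda$ and letting $\lambda\downarrow0$ yields
\[
\varphi(z)\ge\varphi(w+p)+\pair{-f'(w)-t^{-1}\RW p}{z-w-p}.
\]
This is exactly the stated subgradient inclusion in the paper's convention; finiteness of $\varphi(w+p)$ holds since $\Phi_t(p)\le\Phi_t(0)<\infty$.

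\emph{Monotonicity along the path.}
Write $p_i=p(t_i)$ and $\ell_i=-g-t_i^{-1}p_i$ in Riesz-identified form, so that $\ell_i\in\partial\varphi(w+p_i)$. Adding the two subgradient inequalities gives monotonicity of $\partial\varphi$:
\[
\ip{\ell_1-\ell_2}{p_1-p_2}\ge0,
\quad\text{that is,}\quad
\ip{t_1^{-1}p_1-t_2^{-1}p_2}{p_1-p_2}\le0 .
\]
Expanding this inequality gives
\[
t_1^{-1}\norm{p_1}^2+t_2^{-1}\norm{p_2}^2\le(t_1^{-1}+t_2^{-1})\ip{p_1}{p_2}\le(t_1^{-1}+t_2^{-1})\norm{p_1}\norm{p_2}.
\]
With $a=\norm{p_1}$ and $b=\norm{p_2}$, this reads $(a-b)(a/t_1-b/t_2)\le0$. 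Both claimed inequalities follow from this sign condition by a short case analysis, which I expect to be the only delicate step.
\begin{itemize}
\item If $a>b$, then the condition forces $a/t_1\le b/t_2$. But $t_1<t_2$ gives $a/t_1>b/t_1>b/t_2$, a contradiction (the case $b=0$ included). Hence $a\le b$.
\item If $a/t_1<b/t_2$, then the condition forces $a\ge b$. Combined with $a\le b$ this gives $a=b$, so $a/t_1\ge a/t_2=b/t_2$, a contradiction. Hence $a/t_1\ge b/t_2$.
\end{itemize}

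\emph{Limit as $t\downarrow0$.}
Minimality against $s=0$ gives
\[
\ip{g}{p}+\frac{\norm{p}^2}{2t}+\varphi(w+p)\le\varphi(w).
\]
Bound $\varphi(w+p)$ below by an affine minorant $\varphi(w)+\pair{\ell_0}{p}-c$; I would take $\ell_0\in\partial\varphi(w)$ if it is nonempty, or else a generic affine minorant written around $w$. This gives
\[
\frac{\norm{p}^2}{2t}\le C\norm{p}+c'
\]
with constants independent of $t$. By the first monotonicity result, $\norm{p(t)}\le\norm{p(1)}$ for $t\le1$, so $\norm{p(t)}^2\le 2t(C\norm{p(1)}+c')\to0$.

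\emph{Characterization of stationarity.}
If $p(t)=0$, the inclusion reads $-f'(w)\in\partial\varphi(w)$. Conversely, if $-f'(w)\in\partial\varphi(w)$, then $s=0$ satisfies the optimality inequality: $\Phi_t(s)-\Phi_t(0)\ge\norm{s}^2/(2t)\ge0$. Uniqueness then gives $p(t)=0$ for every $t>0$. This also shows that ``one'' and ``every'' $t$ are equivalent.
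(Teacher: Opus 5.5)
Your proof is correct. It differs from the paper's mainly in being self-contained. The paper proves the lemma by citation:
\begin{itemize}
\item Bauschke--Combettes, Propositions~12.15 and~16.44, for existence, uniqueness and the inclusion;
\item Baraldi--Kouri, Lemma~2 (with $x=w$, $d=-\nabla f(w)$), for both norm comparisons;
\item Bauschke--Combettes, Proposition~12.33(iii), applied to $h_w(s)=\varphi(w+s)+\ip{\nabla f(w)}{s}$ at the origin, for the limit.
\end{itemize}
You replace these with three direct arguments. A one-sided directional-derivative argument gives the variational inequality. Monotonicity of $\partial\varphi$ reduces both comparisons to the sign condition $(a-b)(a/t_1-b/t_2)\le0$. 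An affine-minorant estimate, combined with the first comparison, gives the limit.

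The one substantive difference is the strength of the limit. The cited result gives $t^{-1}\norm{p(t)}_W^2\to0$, i.e.\ $\norm{p(t)}_W=o(\sqrt t)$, and it does not use the path monotonicity. Your estimate gives only $\norm{p(t)}_W=O(\sqrt t)$ in general. It improves to $O(t)$ when $\partial\varphi(w)\ne\emptyset$, since then one can take $c'=0$. Convergence to zero is all the lemma asserts and all the Cauchy search later uses, so nothing is lost. In exchange, your route is elementary and gives an explicit bound.

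One small slip: in the case $a>b$ the chain should read $a/t_1>b/t_1\ge b/t_2$, because the second inequality is an equality when $b=0$. The conclusion is unaffected.
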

\begin{proof}
For fixed $w$, the function
$h_w(s)\defeq\varphi(w+s)+\ip{\nabla f(w)}{s}$
is proper, lower semicontinuous, and convex.
Existence, uniqueness, and the optimality inclusion follow from
\citet[Propositions~12.15 and~16.44]{BauschkeCombettes2017}.
The inclusion also gives the zero-step characterization.

The two norm comparisons follow from
\citet[Lemma~2]{BaraldiKouri2023}, with $x=w$ and
$d=-\nabla f(w)$. Finally, since $0\in\dom h_w$,
\citet[Proposition~12.33(iii)]{BauschkeCombettes2017},
applied to $h_w$ at the origin, gives
$t^{-1}\norm{p(t)}_W^2\to0$ as $t\downarrow0$.
In particular, $\norm{p(t)}_W\to0$.
\end{proof}

For a proper lower semicontinuous convex functional $h$ on a Hilbert
space, write $\prox_{th}(q)$ for the minimizer of
$h(y)+\norm{y-q}^2/(2t)$ in that space's norm. Thus
$w+p(w;t)=\prox_{t\varphi}(w-t\nabla f(w))$.
Fix a reference parameter $\tau>0$ and define
\begin{equation}\label{eq:criticality}
 \chi(w)\defeq \frac1\tau\norm{p(w;\tau)}_W.
\end{equation}
The convergence analysis uses $\chi$; the algorithm does not evaluate
it exactly.

Although we first defined $p(w;t)$ and $\chi(w)$ for
$w\in\dom\varphi$, the proximal formula defines them on all of $W$.
The following result records the continuity needed below.
\begin{lemma}[Continuity and stationarity]\label{lem:chi}
The function $\chi$ extends to a Lipschitz function on $W$, with constant
$L_\chi\defeq 2/\tau+L_f$. At a point in $W$, $\chi(w)=0$ if and only if
$w\in\dom\varphi$ and $0\in f'(w)+\partial\varphi(w)$.
\end{lemma}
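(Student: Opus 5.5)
Since $w+p(w;\tau)=\prox_{\tau\varphi}(w-\tau\nabla f(w))$ makes sense for every $w\in W$, I write
\[
\chi(w)=\frac1\tau\norm{\prox_{\tau\varphi}(w-\tau\nabla f(w))-w}_W
\]
and take this formula as the extension of $\chi$ to all of $W$. Lipschitz continuity then follows from two facts: the triangle inequality, and nonexpansiveness of $\prox_{\tau\varphi}$ for the proper lower semicontinuous convex $\varphi$ \citep{BauschkeCombettes2017}. Put $G(w)\defeq\prox_{\tau\varphi}(w-\tau\nabla f(w))-w$. For $w_1,w_2\in W$,
\[
\norm{G(w_1)-G(w_2)}_W\le \norm{(w_1-w_2)-\tau(\nabla f(w_1)-\nabla f(w_2))}_W+\norm{w_1-w_2}_W\le (2+\tau L_f)\norm{w_1-w_2}_W,
\]
where I use the Lipschitz constant $L_f$ of $\nabla f$ from Assumption~\ref{ass:basic}. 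Dividing by $\tau$ and using $|\,\norm{a}-\norm{b}\,|\le\norm{a-b}$ gives $|\chi(w_1)-\chi(w_2)|\le(2/\tau+L_f)\norm{w_1-w_2}_W$, which is the claimed constant $L_\chi$.

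For the characterization, suppose first that $\chi(w)=0$. Then $w=\prox_{\tau\varphi}(q)$ with $q=w-\tau\nabla f(w)$. A proximal point always lies in $\dom\varphi$, so $w\in\dom\varphi$. The prox optimality condition, i.e.\ the inclusion of Lemma~\ref{lem:prox} written at the reference point $w$ with step $0$, gives $\tau^{-1}\RW(q-w)\in\partial\varphi(w)$. Since $\tau^{-1}\RW(q-w)=-\RW\nabla f(w)=-f'(w)$, this is $0\in f'(w)+\partial\varphi(w)$.

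Conversely, suppose $w\in\dom\varphi$ and $-f'(w)\in\partial\varphi(w)$. Lemma~\ref{lem:prox} applies at this $w$ and states that $p(w;\tau)=0$ exactly under this condition. Hence $\chi(w)=0$.

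The only delicate point is bookkeeping rather than analysis. For $w\notin\dom\varphi$, the object $p(w;\tau)$ of \eqref{eq:exact_prox} has to be read as $\prox_{\tau\varphi}(q)-w$, and the subdifferential inclusion must be derived from the prox characterization directly, not quoted from Lemma~\ref{lem:prox}, which was stated only for $w\in\dom\varphi$. I would also keep the Riesz map explicit, so that $\RW\nabla f(w)=f'(w)$ converts the Hilbert-space inclusion into the stated dual form.
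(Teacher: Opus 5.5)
Your proof is correct and follows essentially the same route as the paper: nonexpansiveness of $\prox_{\tau\varphi}$ with the triangle and reverse triangle inequalities gives the constant $2/\tau+L_f$, and the fixed-point characterization $w=\prox_{\tau\varphi}(w-\tau\nabla f(w))$ with proximal optimality gives the stationarity equivalence. The only difference is that the paper derives nonexpansiveness from monotonicity of $\partial\varphi$, whereas you cite it.
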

\begin{proof}
For $z_i=\prox_{\tau\varphi}(q_i)$, monotonicity of $\partial\varphi$
gives $\norm{z_1-z_2}_W^2\le\ip{q_1-q_2}{z_1-z_2}$, hence the proximal
map is nonexpansive. Apply this with $q_i=w_i-\tau \nabla f(w_i)$ and subtract
$w_i$. The difference of the steps has norm at most
$(2+\tau L_f)\norm{w_1-w_2}_W$. The reverse triangle inequality proves
the Lipschitz estimate. A zero step is equivalent to
$w=\prox_{\tau\varphi}(w-\tau\nabla f(w))$;
proximal optimality is equivalent to
$w\in\dom\varphi$ and $-f'(w)\in\partial\varphi(w)$.
\end{proof}

\section{Primal--dual gap certificate}
\label{sec:certificate}
We express the gap as two nonnegative optimality defects and bound
the primal error without assuming dual attainment. When a zero-gap
dual solution exists, the resulting error identity specializes
\citet[Theorem~3.3]{BartelsKaltenbach2024} to the proximal objective.
For $q\in W$ and $t>0$, let
\begin{equation}\label{eq:primal_dual}
 \begin{aligned}
 \Psi_{q,t}(y)&\defeq\frac1{2t}\norm{y-q}_W^2+R(Ty),\\
 D_{q,t}(\mu)&\defeq\pair{\mu}{Tq}
       -\frac t2\norm{T^*\mu}_{W^*}^2-R^*(\mu).
 \end{aligned}
\end{equation}
The conjugate is $R^*(\mu)\defeq\sup_{x\in X}(\pair{\mu}{x}-R(x))$.
All gap tests below require $Ty\in\dom R$ and $\mu\in\dom R^*$.

For $w \in \dom\varphi$ and \(q=w-t\nabla f(w)\), the substitution \(y=w+s\)
identifies minimization of \(\Psi_{q,t}\) with
\eqref{eq:exact_prox}, up to the additive constant
\(t\|\nabla f(w)\|_W^2/2\). In particular, its minimizer
is \(w+p(w;t)\).

\begin{proposition}[Gap identity and error bound]\label{prop:gap}
Set $r\defeq t^{-1}\RW(y-q)+T^*\mu$ and
$\varepsilon_R\defeq R(Ty)+R^*(\mu)-\pair{\mu}{Ty}$. Then
\begin{equation}\label{eq:gap_identity}
 \mathcal G_{q,t}(y,\mu)\defeq \Psi_{q,t}(y)-D_{q,t}(\mu)
 =\varepsilon_R+\frac t2\norm{r}_{W^*}^2\ge0.
\end{equation}
The dual objective in \eqref{eq:primal_dual} is a lower bound for the primal
minimum. If $y_\star$ minimizes $\Psi_{q,t}$, then
\begin{equation}\label{eq:gap_distance}
 \frac1{2t}\norm{y-y_\star}_W^2
 \le \Psi_{q,t}(y)-\Psi_{q,t}(y_\star)
 \le\mathcal G_{q,t}(y,\mu).
\end{equation}
No dual attainment or chain-rule qualification is required for these claims.
\end{proposition}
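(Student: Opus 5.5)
The plan is a direct algebraic computation, followed by two convexity estimates. I work throughout with $y$ such that $Ty\in\dom R$ and $\mu\in\dom R^*$, so that every quantity is finite; otherwise the gap is $+\infty$ and the claims are read in the extended sense.

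\textbf{Step 1: the identity.} Expand $\Psi_{q,t}(y)-D_{q,t}(\mu)$. Add and subtract $\pair{\mu}{Ty}$ to isolate $\varepsilon_R$. What remains is
\[
\frac1{2t}\norm{y-q}_W^2+\pair{\mu}{T(y-q)}+\frac t2\norm{T^*\mu}_{W^*}^2 .
\]
Write $\pair{\mu}{T(y-q)}=\pair{T^*\mu}{y-q}$, and use $\norm{\ell}_{W^*}=\norm{\RW^{-1}\ell}_W$ to express everything in $W$ with $a\defeq t^{-1/2}(y-q)$ and $b\defeq t^{1/2}\RW^{-1}T^*\mu$. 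The remainder is then $\tfrac12\norm{a+b}_W^2=\tfrac t2\norm{r}_{W^*}^2$. Nonnegativity follows because $\varepsilon_R\ge0$ is exactly the Fenchel--Young inequality, which holds directly from the definition of $R^*$.

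\textbf{Step 2: weak duality.} Since $\Psi_{q,t}(z)-D_{q,t}(\mu)=\mathcal G_{q,t}(z,\mu)\ge0$ for every $z$ (trivially when $\Psi_{q,t}(z)=+\infty$), taking the infimum over $z$ gives $D_{q,t}(\mu)\le\inf\Psi_{q,t}$. This uses only Fenchel--Young pointwise, so no constraint qualification and no dual attainment enter.

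\textbf{Step 3: the distance bound.} The minimizer $y_\star$ exists by Lemma~\ref{lem:prox} after the substitution $y=w+s$; more generally it is $\prox_{t\varphi}(q)$. The right inequality is Step~2 with $\inf\Psi_{q,t}=\Psi_{q,t}(y_\star)$. For the left inequality, note that $\Psi_{q,t}$ is $t^{-1}$-strongly convex. Apply the strong convexity inequality along the segment $y_\star+\theta(y-y_\star)$ with $\theta\in(0,1)$; convexity of $R\circ T$ gives
\[
\Psi(y_\star)\le\Psi(y_\star+\theta(y-y_\star))\le(1-\theta)\Psi(y_\star)+\theta\Psi(y)-\frac{\theta(1-\theta)}{2t}\norm{y-y_\star}_W^2 ,
\]
where the quadratic part contributes the exact Hilbert identity. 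Divide by $\theta$ and let $\theta\downarrow0$ to get $\frac1{2t}\norm{y-y_\star}^2\le\Psi(y)-\Psi(y_\star)$. This avoids any subdifferential sum or chain rule for $R\circ T$, which is precisely why no qualification is needed.

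The main obstacle is bookkeeping rather than depth: the square completion in Step~1 must be carried out in the correct dual pairing, with $T^*\mu\in W^*$ and Riesz transport. The only other care needed is to handle infinite values so that the claims remain meaningful without attainment.
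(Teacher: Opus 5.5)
Your proposal is correct and follows essentially the paper's proof. The identity comes from completing the square in the Riesz-transported pairing together with Fenchel--Young, the right inequality from weak duality, and the left inequality from $t^{-1}$-strong convexity of $\Psi_{q,t}$. The differences are cosmetic. You read off weak duality from the identity applied at every $z$; the paper instead minimizes the Fenchel--Young minorant explicitly, which is the same computation. You also unpack the paper's ``strong convexity plus $0\in\partial\Psi_{q,t}(y_\star)$'' into the elementary segment argument.
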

\begin{proof}
Fenchel--Young gives $R(Ty)\ge\pair{\mu}{Ty}-R^*(\mu)$.
Minimizing the right side plus the quadratic over $y$ gives
$y=q-t\RW^{-1}T^*\mu$ and the value $D_{q,t}(\mu)$; this proves weak
duality. Expanding
$\frac t2\norm{t^{-1}\RW(y-q)+T^*\mu}_{W^*}^2$
gives the quadratic in $\Psi_{q,t}$, the quadratic in $-D_{q,t}$, and the
cross term $\pair{\mu}{T(y-q)}$. Adding $\varepsilon_R$ proves
\eqref{eq:gap_identity}. Strong convexity of $\Psi_{q,t}$ and
$0\in\partial\Psi_{q,t}(y_\star)$ give the first inequality in
\eqref{eq:gap_distance}; weak duality gives the second.
\end{proof}

\begin{proposition}[Solution errors represented by the gap]
\label{prop:error_identity}
Let $y_\star$ be the unique minimizer of $\Psi_{q,t}$, and let
$(y,\mu)$ have a finite gap. Then 
\begin{equation}\label{eq:two_distance_bound}
 \frac{\norm{y-y_\star}_W^2}{2t}
 +\frac{\norm{q-t\RW^{-1}T^*\mu-y_\star}_W^2}{2t}
 \le \mathcal G_{q,t}(y,\mu).
\end{equation}
In particular, if $y=q-t\RW^{-1}T^*\mu$, then
\begin{equation}\label{eq:recovered_gap_distance}
 \norm{y-y_\star}_W^2\le t\mathcal G_{q,t}(y,\mu).
\end{equation}
These inequalities do not require dual attainment.

If there is a dual vector $\mu_\star$ such that
$\mathcal G_{q,t}(y_\star,\mu_\star)=0$, then the following
exact identity holds:
\begin{equation}\label{eq:exact_error_identity}
\begin{aligned}
 \mathcal G_{q,t}(y,\mu)
 ={}&\frac{\norm{y-y_\star}_W^2}{2t}
      +\frac t2\norm{T^*(\mu-\mu_\star)}_{W^*}^2\\
 &+R(Ty)-R(Ty_\star)-\pair{\mu_\star}{T(y-y_\star)}\\
 &+R^*(\mu)-R^*(\mu_\star)-\pair{\mu-\mu_\star}{Ty_\star}.
\end{aligned}
\end{equation}
Each of the two remainders on the last two lines is nonnegative.
\end{proposition}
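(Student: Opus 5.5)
The plan is to split the gap at the exact minimizer,
\[
\mathcal G_{q,t}(y,\mu)=\bigl[\Psi_{q,t}(y)-\Psi_{q,t}(y_\star)\bigr]+\bigl[\Psi_{q,t}(y_\star)-D_{q,t}(\mu)\bigr],
\]
and to bound each bracket below by one of the two distances in \eqref{eq:two_distance_bound}. The first bracket is at least $\norm{y-y_\star}_W^2/(2t)$ by the first inequality of \eqref{eq:gap_distance} in Proposition~\ref{prop:gap}.

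For the second bracket I will reuse the construction from the weak-duality proof. Define the Lagrangian-type function
\[
L_\mu(z)\defeq\frac1{2t}\norm{z-q}_W^2+\pair{\mu}{Tz}-R^*(\mu).
\]
This is a quadratic that is $1/t$-strongly convex in $z$. Its gradient in $W$ is $t^{-1}(z-q)+\RW^{-1}T^*\mu$, so its minimizer is $\hat y\defeq q-t\RW^{-1}T^*\mu$ and its minimum value is $D_{q,t}(\mu)$. Because it is an exact quadratic, we have the identity $L_\mu(z)=D_{q,t}(\mu)+\norm{z-\hat y}_W^2/(2t)$ for all $z$. Fenchel--Young gives $R(Ty_\star)\ge\pair{\mu}{Ty_\star}-R^*(\mu)$, hence $\Psi_{q,t}(y_\star)\ge L_\mu(y_\star)$. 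Combining these yields $\Psi_{q,t}(y_\star)-D_{q,t}(\mu)\ge\norm{\hat y-y_\star}_W^2/(2t)$.

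Adding the two bounds gives \eqref{eq:two_distance_bound}. When $y=\hat y$ the two terms coincide, which gives \eqref{eq:recovered_gap_distance}. Finiteness of the gap makes every quantity finite, and no dual solution is used anywhere.

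For the exact identity \eqref{eq:exact_error_identity}, I apply \eqref{eq:gap_identity} to $(y_\star,\mu_\star)$. Since both summands there are nonnegative and their sum is zero, each vanishes: $\varepsilon_R(y_\star,\mu_\star)=0$ and $t^{-1}\RW(y_\star-q)=-T^*\mu_\star$. Because the gap at $(y_\star,\mu_\star)$ is zero, I can write $\mathcal G_{q,t}(y,\mu)=[\Psi_{q,t}(y)-\Psi_{q,t}(y_\star)]+[D_{q,t}(\mu_\star)-D_{q,t}(\mu)]$ and expand each bracket.

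In the primal bracket, the expansion
\[
\norm{y-q}^2-\norm{y_\star-q}^2=\norm{y-y_\star}^2+2\ip{y-y_\star}{y_\star-q}
\]
combined with the optimality relation turns the cross term into $-\pair{\mu_\star}{T(y-y_\star)}$.

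In the dual bracket, I expand
\[
\norm{T^*\mu}_{W^*}^2-\norm{T^*\mu_\star}_{W^*}^2=\norm{T^*(\mu-\mu_\star)}_{W^*}^2+2\pair{\mu-\mu_\star}{T\RW^{-1}T^*\mu_\star}.
\]
I then substitute $\RW^{-1}T^*\mu_\star=(q-y_\star)/t$. The resulting $\pair{\mu-\mu_\star}{Tq}$ terms cancel, leaving $-\pair{\mu-\mu_\star}{Ty_\star}$.

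Collecting terms gives \eqref{eq:exact_error_identity}. The main obstacle is this bookkeeping, since the adjoint and the Riesz map must be kept straight because $X$ is only Banach. I will write every pairing as $\pair{\cdot}{\cdot}$ between $X^*$ and $X$, or between $W^*$ and $W$, to keep this manageable.

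Finally, $\varepsilon_R(y_\star,\mu_\star)=0$ is the Fenchel--Young equality. It gives $\mu_\star\in\partial R(Ty_\star)$, and likewise $Ty_\star$, viewed in $X^{**}$, acts as a subgradient of $R^*$ at $\mu_\star$ because $R^*(\nu)\ge\pair{\nu}{Ty_\star}-R(Ty_\star)$ for all $\nu$. These two subgradient inequalities are exactly the nonnegativity of the two remainder lines.
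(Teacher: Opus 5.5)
Your proposal is correct and follows essentially the same route as the paper. You use the same split $\mathcal G_{q,t}(y,\mu)=[\Psi_{q,t}(y)-\Psi_{q,t}(y_\star)]+\mathcal G_{q,t}(y_\star,\mu)$; your Lagrangian identity for $L_\mu$ simply re-derives the residual term of \eqref{eq:gap_identity} at $(y_\star,\mu)$, which the paper cites directly, and your expansion of both quadratics using $y_\star-q=-t\RW^{-1}T^*\mu_\star$ and the Fenchel equality matches the paper's proof of the exact identity and the nonnegativity of both remainders.
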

\begin{proof}
Split the gap as
\[
 \mathcal G_{q,t}(y,\mu)
 =\Psi_{q,t}(y)-\Psi_{q,t}(y_\star)
  +\mathcal G_{q,t}(y_\star,\mu).
\]
Strong convexity bounds the first term below by
$\norm{y-y_\star}_W^2/(2t)$; the residual term in
\eqref{eq:gap_identity} bounds the second below by
$\norm{y_\star-q+t\RW^{-1}T^*\mu}_W^2/(2t)$.
This proves \eqref{eq:two_distance_bound}, and the recovery formula
makes the two squared distances equal and gives \eqref{eq:recovered_gap_distance}.

If $\mathcal G_{q,t}(y_\star,\mu_\star)=0$, then
\eqref{eq:gap_identity} and Fenchel equality give
\[
 y_\star-q=-t\RW^{-1}T^*\mu_\star,
 \qquad \mu_\star\in\partial R(Ty_\star),\qquad
 R^*(\mu_\star)=\pair{\mu_\star}{Ty_\star}-R(Ty_\star).
\]
Expanding the two quadratic terms therefore yields
\[
\begin{aligned}
 \Psi_{q,t}(y)-\Psi_{q,t}(y_\star)
 &=\frac{\norm{y-y_\star}_W^2}{2t}
   +R(Ty)-R(Ty_\star)-\pair{\mu_\star}{T(y-y_\star)},\\
 D_{q,t}(\mu_\star)-D_{q,t}(\mu)
 &=\frac t2\norm{T^*(\mu-\mu_\star)}_{W^*}^2
   +R^*(\mu)-R^*(\mu_\star)-\pair{\mu-\mu_\star}{Ty_\star}.
\end{aligned}
\]
Their sum is \eqref{eq:exact_error_identity}.
The first remainder is nonnegative by the subgradient inequality;
the second follows from
$R^*(\mu)\ge\pair{\mu}{Ty_\star}-R(Ty_\star)$ and the displayed
Fenchel equality.
\end{proof}

The identity \eqref{eq:exact_error_identity} expresses the gap as the
sum of the primal and dual objective errors, with their quadratic
contributions displayed explicitly. This is a specialization of the 
generalized Prager--Synge identities to the proximal problem
\citep{BartelsKaltenbach2024,AntilBartelsKaltenbachKhandelwal2025}.
The exact solution is used to interpret the gap; evaluating the gap
requires only the candidate pair. The outer results use
Proposition~\ref{prop:gap} and do not assume existence of $\mu_\star$. 

The factor-two improvement of \eqref{eq:recovered_gap_distance}
over \eqref{eq:gap_distance} is sharp even
for a finite Lipschitz regularizer. Take $W=X=\R$, $T=I$,
$R(x)=|x|$, and $q=t$. Then $y_\star=0$; for $\mu=-1$,
the recovered point is $y=2t$ and
$\mathcal G_{q,t}(y,\mu)=4t=\norm{y-y_\star}^2/t$.

The two-distance bound also gives a ball containing $y_\star$.
The recovered point is $y-t\RW^{-1}r$, so the parallelogram identity
in \eqref{eq:two_distance_bound} gives
\begin{equation}\label{eq:shifted_error_bound}
 \left\|y-\frac t2\RW^{-1}r-y_\star\right\|_W^2
 \le t\mathcal G_{q,t}(y,\mu)-\frac{t^2}{4}\norm{r}_{W^*}^2.
\end{equation}
The right-hand side is nonnegative by \eqref{eq:gap_identity}.
For $r=0$, this is \eqref{eq:recovered_gap_distance}.

These sharper bounds supplement the original certificate below. 
They apply to pairs with finite gaps and require
$Ty\in\dom R$. In particular, for the indicator
$R=\delta_C$ of a nonempty closed convex set $C\subseteq X$,
a finite gap still requires $Ty\in C$.
The shifted center need not itself be feasible; it describes the
error bound and is not automatically an admissible trial point.

For $\epsilon\ge0$, the Fenchel defect also has the interpretation
\begin{equation}\label{eq:epsilon_graph}
 \mu\in\partial_\epsilon R(Ty)
 \quad\Longleftrightarrow\quad
 \varepsilon_R\le\epsilon,
\end{equation}
where $\mu\in\partial_\epsilon R(x)$ means
$R(v)\ge R(x)+\pair{\mu}{v-x}-\epsilon$ for every $v\in X$ and
$x\in\dom R$. Indeed, taking the supremum of
$\pair{\mu}{v}-R(v)$ gives the forward implication, and Fenchel--Young
gives the reverse implication.

\begin{definition}[Relative certificate]\label{def:certificate}
Fix $0<\sigma<1$. For $w\in\dom\varphi$, $t>0$, and $q=w-t\nabla f(w)$, a pair $(s,\mu)$ is certified if its gap is finite and
\begin{equation}\label{eq:certificate}
 \mathcal G_{q,t}(w+s,\mu)\le\frac{\sigma^2}{2t}\norm{s}_W^2.
\end{equation}
If a rigorous upper bound replaces the gap in this test, use that
same bound in \eqref{eq:upper_criticality} and both tests of
Algorithm~\ref{alg:reference}. The results below apply to either choice.
\end{definition}

\begin{theorem}[Distance, descent, and model decrease]\label{thm:certificate}
Let $(s,\mu)$ be certified and set $p=p(w;t)$.
Then
\begin{gather}
 \norm{s-p}_W\le\sigma\norm{s}_W,\qquad
 \frac{\norm{p}_W}{1+\sigma}\le\norm{s}_W
       \le\frac{\norm{p}_W}{1-\sigma},\label{eq:norm_comparison}\\
 \ip{\nabla f(w)}{s}+\varphi(w+s)-\varphi(w)
       \le-\frac{1-\sigma}{t}\norm{s}_W^2.\label{eq:linear_decrease}
 \intertext{If $B:W\to W$ is bounded and self-adjoint and $t\norm{B}\le1-\sigma$,}
 -\left(\ip{\nabla f(w)}{s}+\tfrac12\ip{Bs}{s}
          +\varphi(w+s)-\varphi(w)\right)
       \ge\frac{1-\sigma}{2t}\norm{s}_W^2.\label{eq:model_decrease}
\end{gather}
In particular, $p=0$ forces $s=0$ and zero gap.
\end{theorem}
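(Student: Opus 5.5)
The plan is to get the distance bound directly from Proposition~\ref{prop:gap}, derive the norm comparisons from it by the triangle inequality, and obtain the two decrease estimates from strong convexity of $\Psi_{q,t}$ compared against the point $w$ itself.

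\emph{Distance and norm comparisons.} Take $y=w+s$ and $y_\star=w+p$; by the discussion preceding Proposition~\ref{prop:gap}, $y_\star$ minimizes $\Psi_{q,t}$. Then \eqref{eq:gap_distance} together with \eqref{eq:certificate} gives
\[
\frac1{2t}\norm{s-p}_W^2\le\mathcal G_{q,t}(w+s,\mu)\le\frac{\sigma^2}{2t}\norm{s}_W^2 ,
\]
which is the first inequality in \eqref{eq:norm_comparison}. The triangle inequalities $\norm{p}_W\le\norm{s}_W+\norm{s-p}_W$ and $\norm{s}_W\le\norm{p}_W+\norm{s-p}_W$ then yield the two-sided bounds, using $1-\sigma>0$. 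If $p=0$, the upper bound forces $s=0$, and then the certificate forces the gap to vanish.

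\emph{Linear decrease.} I would compare $\Psi_{q,t}$ at $w+s$ with its value at $w\in\dom\varphi$. The first inequality of \eqref{eq:gap_distance} is really the strong convexity estimate $\Psi_{q,t}(z)-\Psi_{q,t}(y_\star)\ge\norm{z-y_\star}^2/(2t)$ for every $z$. Applying it at $z=w$ gives
\[
\Psi_{q,t}(w)-\Psi_{q,t}(y_\star)\ge\frac{\norm{p}_W^2}{2t}.
\]
Combining this with $\Psi_{q,t}(w+s)-\Psi_{q,t}(y_\star)\le\mathcal G\le\sigma^2\norm{s}_W^2/(2t)$ gives
\[
\Psi_{q,t}(w+s)-\Psi_{q,t}(w)\le\frac{\sigma^2\norm{s}_W^2-\norm{p}_W^2}{2t}.
\]
Expanding the quadratic, the left side equals $\ip{\nabla f(w)}{s}+\norm{s}_W^2/(2t)+\varphi(w+s)-\varphi(w)$. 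It then remains to show
\[
\sigma^2\norm{s}_W^2-\norm{p}_W^2-\norm{s}_W^2\le-2(1-\sigma)\norm{s}_W^2,
\]
that is, $\norm{p}_W^2\ge(1-\sigma)^2\norm{s}_W^2$. This is exactly the lower bound $\norm{p}_W\ge(1-\sigma)\norm{s}_W$ from \eqref{eq:norm_comparison}. I expect this step to be the main obstacle: one must choose the comparison point $w$ (rather than using only $y_\star$) and then notice that the resulting quadratic inequality closes precisely with $\norm{p}_W\ge(1-\sigma)\norm{s}_W$. If this route failed, a fallback would be to work with the $\epsilon$-subgradient interpretation \eqref{eq:epsilon_graph} and the residual $r$.

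\emph{Model decrease.} Since $\ip{Bs}{s}\le\norm{B}\norm{s}_W^2\le(1-\sigma)\norm{s}_W^2/t$, half of this term is absorbed by \eqref{eq:linear_decrease}, leaving $(1-\sigma)\norm{s}_W^2/(2t)$, which is \eqref{eq:model_decrease}.
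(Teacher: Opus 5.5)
Your proof is correct. The distance bound, the norm comparisons, the case $p=0$, and the model decrease follow the paper's argument. The difference is in \eqref{eq:linear_decrease}.

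The paper never compares with $\Psi_{q,t}(w)$. It uses that $\mu$ is an $\varepsilon_R$-subgradient of $R$ at $T(w+s)$, evaluated at $Tw$, to get $\ip{\nabla f(w)}{s}+\varphi(w+s)-\varphi(w)\le-\norm{s}_W^2/t+\pair{r}{s}+\varepsilon_R$. It then bounds $\pair{r}{s}+\varepsilon_R\le\mathcal G_{q,t}(w+s,\mu)/\sigma+\sigma\norm{s}_W^2/(2t)\le\sigma\norm{s}_W^2/t$ with a weighted Young inequality and the decomposition \eqref{eq:gap_identity}. The norm comparisons play no role there.

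Your route uses only function values: the primal suboptimality bound, strong convexity at the comparison point $w$, and $\norm{p}_W\ge(1-\sigma)\norm{s}_W$. The identity $(1-\sigma^2)+(1-\sigma)^2=2(1-\sigma)$ makes the constant come out exactly.

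\textbf{What your route buys.} It is more elementary and slightly more general. The dual candidate enters only through weak duality, so any rigorous bound on $\Psi_{q,t}(w+s)-\min\Psi_{q,t}$ of size $\sigma^2\norm{s}_W^2/(2t)$ would suffice for the whole theorem. It also gives the intermediate estimate $-[(1-\sigma^2)\norm{s}_W^2+\norm{p}_W^2]/(2t)$, which is better when $\norm{p}_W$ is large relative to $\norm{s}_W$.

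\textbf{What the paper's route buys.} Because it keeps the Fenchel defect and the residual separate, setting $r=0$ gives the sharper recovered-pair constants at once. These are $1-\sigma^2/2$ in \eqref{eq:recovered_linear_decrease} and $1-\sigma^2$ under the doubled test \eqref{eq:recovered_variant}. Your comparison with $w$ does not reproduce them. For \eqref{eq:recovered_variant} with $\sigma>1/2$, take even the refined inputs $\norm{s-p}_W\le\sigma\norm{s}_W$ and $\Psi_{q,t}(w+s)-\min\Psi_{q,t}\le\mathcal G_{q,t}(w+s,\mu)-\norm{s-p}_W^2/(2t)$. Combined your way, they give only the constant $3/4-\sigma^2$, which is below the required $1-\sigma$.
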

\begin{proof}
Apply \eqref{eq:gap_distance} to $y=w+s$ and $y_\star=w+p$.
The triangle inequality gives both norm comparisons.
The $\varepsilon_R$-subgradient inequality at $T(w+s)$, evaluated at $Tw$,
gives
\[
 \ip{\nabla f(w)}{s}+\varphi(w+s)-\varphi(w)
 \le-\frac1t\norm{s}_W^2+\pair{r}{s}+\varepsilon_R.
\]
Weighted Young's inequality and $0<\sigma<1$ imply
\[
 \begin{aligned}
 \pair{r}{s}+\varepsilon_R
 &\le\frac{t}{2\sigma}\norm{r}_{W^*}^2
          +\frac\sigma{2t}\norm{s}_W^2+\varepsilon_R\\
 &\le\frac1\sigma\mathcal G_{q,t}(w+s,\mu)
          +\frac\sigma{2t}\norm{s}_W^2
 \le\frac\sigma t\norm{s}_W^2.
 \end{aligned}
\]
This proves \eqref{eq:linear_decrease}. Subtracting a quadratic of size at
most $\norm{B}\norm{s}_W^2/2$ proves \eqref{eq:model_decrease}.
If $p=0$, \eqref{eq:norm_comparison} gives $s=0$, and
\eqref{eq:certificate} forces zero gap.
\end{proof}

For a certified recovered pair, the preceding bounds can be sharpened
without changing the test. Equations~\eqref{eq:certificate} and
\eqref{eq:recovered_gap_distance} give
$\norm{s-p}_W\le(\sigma/\sqrt2)\norm{s}_W$.
Moreover, $r=0$ and $\varepsilon_R=\mathcal G_{q,t}(w+s,\mu)$
in the proof of Theorem~\ref{thm:certificate} give
\begin{equation}\label{eq:recovered_linear_decrease}
 \ip{\nabla f(w)}{s}+\varphi(w+s)-\varphi(w)
 \le-\frac{1-\sigma^2/2}{t}\norm{s}_W^2.
\end{equation}
The outer analysis retains the constants in
Theorem~\ref{thm:certificate}, which apply also when $r\ne0$.

\begin{remark}[The classical relative-error inequality]
For the exact gap in \eqref{eq:gap_identity}, condition \eqref{eq:certificate} is equivalently
\[
 \norm{s+t\nabla f(w)+t\RW^{-1}T^*\mu}_W^2
       +2t\varepsilon_R\le\sigma^2\norm{s}_W^2.
\]
Testing a rigorous upper bound for the gap still implies this inequality,
but need not be equivalent to it.
Furthermore, $f'(w)+T^*\mu$ is an $\varepsilon_R$-subgradient of
$v\mapsto\ip{\nabla f(w)}{v}+\varphi(v)$ at $w+s$, by substitution
in the subgradient inequality. This is the classical relative-error form
associated with enlarged subdifferentials \citep{SolodovSvaiter1999}.
The outer method below uses the step satisfying this condition
as a trust-region comparison step.
\end{remark}

\subsection{A computable stationarity test}
At a stationary outer point, the relative condition \eqref{eq:certificate} requires a zero
step and zero gap. An asymptotically convergent
inner method need not produce those exactly in finitely many iterations.
For a positive stopping tolerance, we therefore first test an absolute bound.
For general extended-valued regularizers, a zero-gap pair
need not exist. For example, take $W=X=\R$, $T=0$, $f=0$, and
$R(x)=-\sqrt{x}$ for $x\ge0$, with $R(x)=+\infty$ otherwise.
Then $F=f+R\circ T\equiv0$, so every outer point is stationary,
whereas $R^*(\mu)=-1/(4\mu)>0$ for $\mu<0$ and
$R^*(\mu)=+\infty$ otherwise. At $q=w$, the dual supremum
is zero but is not attained. The pairs
$(y^j,\mu^j)=(w,-j)$, $j\ge1$, have gap $1/(4j)\to0$,
although no finite pair has zero gap. Thus the relative
condition cannot be satisfied. Along this sequence, however,
the absolute bound defined below satisfies
$U_\tau(y^j,\mu^j;w)=(2\tau j)^{-1/2}$, so every positive
stopping tolerance is met after finitely many iterations.
At $q=w-\tau \nabla f(w)$, define
\begin{equation}\label{eq:upper_criticality}
 U_\tau(y,\mu;w)\defeq 
 \frac{\norm{y-w}_W}{\tau}
 +\sqrt{\frac{2\mathcal G_{q,\tau}(y,\mu)}{\tau}}.
\end{equation}
Equation~\eqref{eq:gap_distance} immediately gives $\chi(w)\le U_\tau$.
For a recovered pair, \eqref{eq:recovered_gap_distance} also yields
$\chi(w)\le\norm{y-w}_W/\tau+
\sqrt{\mathcal G_{q,\tau}(y,\mu)/\tau}$.
We keep \eqref{eq:upper_criticality} in the algorithm so that the same
test applies to every pair with a finite gap. 

\begin{assumption}[Inner approximation property]\label{ass:oracle}
For every requested $w\in\dom\varphi$ and $t>0$, set
$q=w-t\nabla f(w)$. The inner procedure produces pairs
$(y^j,\mu^j)$ with finite gaps and
$\mathcal G_{q,t}(y^j,\mu^j)\to0$.
The gap or a rigorous upper bound tending to zero is available for testing.
\end{assumption}
This is an assumption on the inner procedure and its computable
certificates. It requires neither dual attainment nor computation of an exact
proximal point, and imposes no prescribed convergence rate. For extended-valued
$R$, maintaining $Ty^j\in\dom R$ is part of the requirement.
For each request to the inner procedure, $w$, $t$, and $q$
remain fixed. The pairs $(y^j,\mu^j)$ are primal and dual
candidates for the problems in \eqref{eq:primal_dual};
the unique primal minimizer is $w+p(w;t)$.
Section~\ref{sec:inner} constructs such pairs with gaps tending
to zero when $X$ is Hilbert and
Assumption~\ref{ass:lipschitz_R} holds. The following algorithm
tests these pairs at the reference parameter $t=\tau$.

\Needspace{9\baselineskip}
\begin{algorithm}[Stationarity test and relative approximation]\label{alg:reference}
Given $w\in\dom\varphi$, a positive tolerance $\varepsilon$, and $q=w-\tau \nabla f(w)$, generate
the pairs in Assumption~\ref{ass:oracle}. At each pair:
\begin{enumerate}
\item[(i)] If $U_\tau(y^j,\mu^j;w)\le\varepsilon$, return a stationarity
certificate $\chi(w)\le\varepsilon$.
\item[(ii)] Otherwise, if \eqref{eq:certificate} holds with $s=y^j-w$ and $t=\tau$,
return this relative certificate and continue the outer iteration.
\end{enumerate}
\end{algorithm}

\begin{proposition}[Finite termination of the stationarity test]\label{prop:screening}
Under Assumption~\ref{ass:oracle}, Algorithm~\ref{alg:reference} stops after
finitely many inner iterations. Termination by the first test guarantees
$\chi(w)\le\varepsilon$. Termination by the second test guarantees
\begin{equation}\label{eq:continue_criticality}
 \chi(w)>c_\sigma\varepsilon,
 \qquad c_\sigma\defeq \frac{1-\sigma}{1+\sigma}>0.
\end{equation}
At any requested $t>0$ with $p(w;t)\ne0$, the relative test alone succeeds
after finitely many inner iterations.
\end{proposition}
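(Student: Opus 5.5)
I will treat the three claims separately, starting with the two termination guarantees, which are direct, and then the finite termination, which requires a case split on whether $p(w;\tau)$ vanishes.

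\emph{First test.} If test (i) fires, then $\chi(w)\le U_\tau(y^j,\mu^j;w)\le\varepsilon$. This follows from \eqref{eq:gap_distance} with $t=\tau$, $y_\star=w+p(w;\tau)$, and the triangle inequality:
\[
\tau\chi(w)=\norm{p(w;\tau)}_W\le\norm{y^j-w}_W+\norm{y^j-y_\star}_W\le\norm{y^j-w}_W+\sqrt{2\tau\mathcal G_{q,\tau}(y^j,\mu^j)}.
\]
Dividing by $\tau$ gives exactly $U_\tau$.

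\emph{Second test.} If test (ii) fires, then test (i) failed at that same index, so $U_\tau>\varepsilon$. Write $s=y^j-w$ and $p=p(w;\tau)$. The certificate gives $\sqrt{2\mathcal G/\tau}\le\sigma\norm{s}_W/\tau$, so
\[
\varepsilon<U_\tau\le(1+\sigma)\norm{s}_W/\tau.
\]
Theorem~\ref{thm:certificate}, applied at $t=\tau$, gives $\norm{s}_W\le\norm{p}_W/(1-\sigma)=\tau\chi(w)/(1-\sigma)$. Combining the two bounds yields $\varepsilon<\frac{1+\sigma}{1-\sigma}\chi(w)$, which is \eqref{eq:continue_criticality}. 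If a rigorous upper bound replaces the gap, the same chain holds, since that bound dominates the gap in both places.

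\emph{Relative test at any $t$ with $p(w;t)\ne0$.} I will prove this first, since the termination claim reuses it. Let $y_\star=w+p(w;t)$. By \eqref{eq:gap_distance} and Assumption~\ref{ass:oracle}, $y^j\to y_\star$, so $\norm{y^j-w}_W\to\norm{p(w;t)}_W>0$. The right side of \eqref{eq:certificate} therefore tends to the positive limit $\sigma^2\norm{p}_W^2/(2t)$, while the gap (or its upper bound) tends to $0$. Hence the certificate holds for all large $j$.

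\emph{Finite termination at $t=\tau$.} I split into two cases.
\begin{itemize}
\item If $p(w;\tau)\ne0$, the previous paragraph shows that test (ii) succeeds eventually, unless test (i) fires earlier. Either way the procedure stops.
\item If $p(w;\tau)=0$, then $y^j\to w$ and $\mathcal G\to0$, so $U_\tau\to0$. Since $\varepsilon>0$, test (i) fires after finitely many iterations.
\end{itemize}
This case split is the only point of care: in the stationary case the relative test may never hold, as the preceding example shows, and the positivity of $\varepsilon$ is exactly what rescues termination. Throughout, the key ingredient is that the gap forces $y^j\to w+p$ with no dual attainment needed, which is what Proposition~\ref{prop:gap} provides.
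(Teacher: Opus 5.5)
Your proof is correct and follows essentially the same route as the paper. You use \eqref{eq:gap_distance} to get $y^j\to w+p(w;t)$, which gives both the relative-test claim and termination via a case split on whether $p(w;\tau)=0$. For the continuation bound you combine $\varepsilon<U_\tau\le(1+\sigma)\norm{s}_W/\tau$ with $\chi(w)\ge(1-\sigma)\norm{s}_W/\tau$, exactly as the paper does.
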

\begin{proof}
Equation~\eqref{eq:gap_distance} and
Assumption~\ref{ass:oracle} give $y^j\to w+p(w;t)$.
If $p(w;t)\ne0$, the right-hand side of the relative test tends to
$\sigma^2\norm{p(w;t)}_W^2/(2t)>0$, whereas the tested gap bound
tends to zero. Thus the relative test succeeds finitely.
If $\chi(w)=0$, the reference request instead has $y^j\to w$ and
$U_\tau\to0$, so its positive absolute tolerance is reached finitely.
These cases prove termination, and \eqref{eq:upper_criticality}
gives the absolute guarantee.

When the second test succeeds, the first has failed, and hence
\[
 \varepsilon<U_\tau\le\frac{1+\sigma}{\tau}\norm{y^j-w}_W,
 \qquad
 \chi(w)\ge\frac{1-\sigma}{\tau}\norm{y^j-w}_W.
\]
Their combination proves \eqref{eq:continue_criticality}.
\end{proof}

The second alternative need not imply $\chi(w)>\varepsilon$; the lower
bound $c_\sigma\varepsilon$ suffices for the iteration estimate. The
returned relative pair justifies continuation, while the Cauchy search
below selects its own proximal parameter.

\section{Cauchy steps and trust-region convergence}
\label{sec:cauchy}
At $w^k\in\dom\varphi$, choose a bounded self-adjoint operator
$B_k:W\to W$ and define
\begin{equation}\label{eq:model}
 m_k(s)\defeq f(w^k)+\ip{\nabla f(w^k)}{s}
       +\tfrac12\ip{B_ks}{s}+\varphi(w^k+s),
 \qquad \pred_k(s)\defeq m_k(0)-m_k(s).
\end{equation}
The operator $B_k$ may be indefinite. We first construct a step with
guaranteed model decrease in the ball $\norm{s}_W\le\Delta_k$, where
$\Delta_k>0$. This construction does not require the full trust-region
subproblem to have a minimizer.

\begin{algorithm}[Geometric Cauchy search]\label{alg:cauchy}
Fix \(0<\zeta<1\). At a point with \(\chi(w^k)>0\), set
\(\bar t_k=\min\{\tau,(1-\sigma)/(1+\norm{B_k})\}\).
For \(j=0,1,\ldots\), let \(t_{k,j}=\zeta^j\bar t_k\) and
set \(q=w^k-t_{k,j}\nabla f(w^k)\).
At each parameter, use the procedure in Assumption~\ref{ass:oracle}
until a pair $(s_{k,j},\mu_{k,j})$ satisfies \eqref{eq:certificate}
at $w=w^k$ and $t=t_{k,j}$.
Return the first pair for which
\(\norm{s_{k,j}}_W\le\Delta_k\), and denote its step
and parameter by \(s_k^{\rm C}\) and \(t_k\).
\end{algorithm}
Throughout this search, \(w^k\), \(B_k\), and \(\Delta_k\)
remain fixed. The index \(j\) counts reductions of the
proximal parameter; each parameter choice may require
several inner iterations.

\begin{theorem}[Finite Cauchy search and decrease]\label{thm:cauchy}
Suppose Assumptions~\ref{ass:basic} and \ref{ass:oracle} hold.
Algorithm~\ref{alg:cauchy} terminates after finitely many inner iterations
and parameter reductions. Its returned point $w^k+s_k^{\rm C}$ belongs to $\dom\varphi$,
and, with $\chi_k\defeq\chi(w^k)$,
\begin{equation}\label{eq:cauchy_decrease}
 \pred_k(s_k^{\rm C})\ge
 \kappa_{\rm C}\chi_k
 \min\left\{\frac{\chi_k}{1+\norm{B_k}},\Delta_k\right\},
\end{equation}
where
\begin{equation}\label{eq:cauchy_constant}
 \kappa_{\rm C}\defeq
 \frac{(1-\sigma)\min\{\tau,\zeta(1-\sigma)\}}
      {2(1+\sigma)^2}>0.
\end{equation}
\end{theorem}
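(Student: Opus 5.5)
Throughout, $t_{k,j}=\zeta^j\bar t_k\le\bar t_k\le(1-\sigma)/(1+\norm{B_k})$, so $t_{k,j}\norm{B_k}\le1-\sigma$. Hence Theorem~\ref{thm:certificate} applies at every parameter of the search, with the model decrease \eqref{eq:model_decrease}.

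\emph{Termination.} Since $\chi_k>0$, Lemma~\ref{lem:prox} gives $p(w^k;t)\ne0$ for every $t>0$. By Proposition~\ref{prop:screening}, each parameter then yields a certified pair after finitely many inner iterations. The returned point $w^k+s_{k,j}$ has a finite gap, so $T(w^k+s_{k,j})\in\dom R$, that is, $w^k+s_{k,j}\in\dom\varphi$. For finitely many reductions, use $\norm{s_{k,j}}_W\le\norm{p(w^k;t_{k,j})}_W/(1-\sigma)$ from \eqref{eq:norm_comparison}. By \eqref{eq:path_norms}, $\norm{p(w^k;t)}_W\to0$ as $t\downarrow0$, so eventually $\norm{s_{k,j}}_W\le\Delta_k$.

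\emph{Decrease.} By \eqref{eq:model_decrease}, $\pred_k(s_k^{\rm C})\ge\frac{1-\sigma}{2t_k}\norm{s_k^{\rm C}}_W^2$. Combining this with the lower bound in \eqref{eq:norm_comparison} gives
\[
\pred_k(s_k^{\rm C})\ge\frac{1-\sigma}{2(1+\sigma)^2}\,\frac{\norm{p(w^k;t_k)}_W}{t_k}\,\norm{p(w^k;t_k)}_W .
\]
Now relate $p(w^k;t_k)$ to $\chi_k=\norm{p(w^k;\tau)}_W/\tau$. Since $t_k\le\tau$, the second monotonicity in \eqref{eq:path_norms} gives $\norm{p(w^k;t_k)}_W/t_k\ge\chi_k$, and then $\norm{p(w^k;t_k)}_W\ge t_k\chi_k$. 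This yields
\[
\pred_k\ge\frac{1-\sigma}{2(1+\sigma)^2}\,\chi_k\,\norm{p(w^k;t_k)}_W\ge\frac{1-\sigma}{2(1+\sigma)^2}\,\chi_k^2\,t_k .
\]

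It remains to bound $t_k$ from below, which is the main case analysis. There are two cases.

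\emph{Case $j=0$.} Then $t_k=\bar t_k=\min\{\tau,(1-\sigma)/(1+\norm{B_k})\}\ge\min\{\tau,\zeta(1-\sigma)\}/(1+\norm{B_k})$, because $1+\norm{B_k}\ge1$ and $\zeta<1$. Hence $\pred_k\ge\kappa_{\rm C}\chi_k^2/(1+\norm{B_k})$.

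\emph{Case $j\ge1$.} The previous parameter $t'=t_k/\zeta$ produced a certified step with $\norm{s_{k,j-1}}_W>\Delta_k$. Then \eqref{eq:norm_comparison} gives $\norm{p(w^k;t')}_W>(1-\sigma)\Delta_k$. By the first monotonicity in \eqref{eq:path_norms} and $t_k=\zeta t'$,
\[
\frac{\norm{p(w^k;t_k)}_W}{t_k}\ge\frac{\norm{p(w^k;t')}_W}{t'}
\quad\Longrightarrow\quad
\norm{p(w^k;t_k)}_W\ge\zeta\norm{p(w^k;t')}_W>\zeta(1-\sigma)\Delta_k .
\]
Inserting this into the first displayed bound, together with $\norm{p(w^k;t_k)}_W/t_k\ge\chi_k$, gives
\[
\pred_k\ge\frac{(1-\sigma)\zeta(1-\sigma)}{2(1+\sigma)^2}\,\chi_k\Delta_k\ge\kappa_{\rm C}\chi_k\Delta_k .
\]

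Taking the minimum over the two cases yields \eqref{eq:cauchy_decrease}. I expect the main care to lie in the bookkeeping of the $j\ge1$ case. There the certified step at the rejected parameter is only a proxy for the exact step, so the factor $(1-\sigma)$ enters through \eqref{eq:norm_comparison} before the path monotonicity can be used. The constant \eqref{eq:cauchy_constant} is a uniform lower bound for the two case constants.
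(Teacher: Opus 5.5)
Your proposal is correct and follows essentially the paper's argument. Termination uses Proposition~\ref{prop:screening} together with $\norm{s_{k,j}}_W\le\norm{p(w^k;t_{k,j})}_W/(1-\sigma)\to0$, and the decrease bound splits into the cases $t_k=\bar t_k$ and $t_k<\bar t_k$ with the same constants. The only difference is bookkeeping: you convert $\norm{s_k^{\rm C}}_W^2$ into $\norm{p(w^k;t_k)}_W^2$ first, whereas the paper bounds the two factors of $\norm{s_k^{\rm C}}_W$ separately.

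One labeling slip: in your case $j\ge1$, the inequality $\norm{p(w^k;t_k)}_W/t_k\ge\norm{p(w^k;t')}_W/t'$ is the second inequality in \eqref{eq:path_norms}, not the first, although the step itself is valid.
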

\begin{proof}
Write $p_k(t)=p(w^k;t)$. Since $\chi_k>0$, Lemma~\ref{lem:prox}
gives $p_k(t)\ne0$ for every $t>0$, so each relative test terminates
finitely by Proposition~\ref{prop:screening}. Moreover,
$\norm{s_{k,j}}_W\le\norm{p_k(t_{k,j})}_W/(1-\sigma)\to0$.
Thus the radius test also succeeds after finitely many reductions.
The returned point $w^k+s_k^{\rm C}$ is in $\dom\varphi$ because 
its gap is finite.

Set $s=s_k^{\rm C}$ and $t=t_k$. The inequalities
$t\norm{B_k}\le1-\sigma$ and $t\le\tau$, together with
Theorem~\ref{thm:certificate} and Lemma~\ref{lem:prox}, give
\begin{equation}\label{eq:cauchy_two_basic}
 \pred_k(s)\ge\frac{1-\sigma}{2t}\norm{s}_W^2,
 \qquad
 \frac{\norm{s}_W}{t}
 \ge\frac{\norm{p_k(t)}_W}{(1+\sigma)t}
 \ge\frac{\chi_k}{1+\sigma}.
\end{equation}
If $t=\bar t_k$, then
$t\ge\min\{\tau,1-\sigma\}/(1+\norm{B_k})$, and hence
\[
 \pred_k(s)\ge
 \frac{(1-\sigma)\min\{\tau,1-\sigma\}}{2(1+\sigma)^2}
 \frac{\chi_k^2}{1+\norm{B_k}}.
\]
Otherwise the preceding certified step, at $t/\zeta$, has norm
greater than $\Delta_k$. The norm comparisons and
\eqref{eq:path_norms} therefore yield
\[
 \norm{s}_W\ge\frac{\norm{p_k(t)}_W}{1+\sigma}
 \ge\frac{\zeta\norm{p_k(t/\zeta)}_W}{1+\sigma}
 >\frac{\zeta(1-\sigma)}{1+\sigma}\Delta_k.
\]
Using this for one factor of $\norm{s}_W$ and the second estimate
in \eqref{eq:cauchy_two_basic} for the other gives
\[
 \pred_k(s)\ge
 \frac{\zeta(1-\sigma)^2}{2(1+\sigma)^2}\chi_k\Delta_k.
\]
Taking the smaller of the two constants proves
\eqref{eq:cauchy_decrease}--\eqref{eq:cauchy_constant}.
\end{proof}

A known upper bound $b_k\ge\norm{B_k}$ may replace $\norm{B_k}$ in the
Cauchy search and in \eqref{eq:cauchy_decrease}. The proof uses only an
upper bound on the quadratic term. In this case, the uniform model bound
in the outer convergence results below is imposed on $b_k$.

One may replace $s_k^{\rm C}$ by an alternative trial step $d_k$
satisfying, for a fixed $0<\nu\le1$,
\begin{equation}\label{eq:improved_step}
 \norm{d_k}_W\le\Delta_k,\qquad
 w^k+d_k\in\dom\varphi,\qquad
 \pred_k(d_k)\ge\nu\pred_k(s_k^{\rm C}).
\end{equation}
The Cauchy step is always admissible; Newton or quasi-Newton steps
may replace it whenever \eqref{eq:improved_step} holds.
Write $\kappa\defeq\nu\kappa_{\rm C}$ throughout this section.

\Needspace{8\baselineskip}
\begin{algorithm}[Trust-region method with relative gap tests]\label{alg:outer}
Choose $w^0\in\dom\varphi$, $0<\Delta_0\le\Delta_{\max}<\infty$, a
positive tolerance $\varepsilon$, and parameters
$0<\eta_1<\eta_2<1$, $0<\gamma_c<1<\gamma_e$.
For $k=0,1,\ldots$:
\begin{enumerate}
\item Apply Algorithm~\ref{alg:reference} at $w=w^k$.
If its stationarity test gives $U_\tau\le\varepsilon$,
terminate and return $w^k$. Otherwise, it returns a relative
certificate, and Proposition~\ref{prop:screening} gives
$\chi_k>\frac{1-\sigma}{1+\sigma}\varepsilon>0$.
\item Choose $B_k$, compute $s_k^{\rm C}$ by Algorithm~\ref{alg:cauchy}, and
choose $d_k$ satisfying \eqref{eq:improved_step}.
The Cauchy decrease bound ensures $\pred_k(d_k)>0$.
\item Compute
$\rho_k=[F(w^k)-F(w^k+d_k)]/\pred_k(d_k)$.
\item If $\rho_k<\eta_1$, set
$w^{k+1}=w^k$ and $\Delta_{k+1}=\gamma_c\Delta_k$.
If $\eta_1\le\rho_k<\eta_2$, set
$w^{k+1}=w^k+d_k$ and $\Delta_{k+1}=\Delta_k$.
If $\rho_k\ge\eta_2$, set $w^{k+1}=w^k+d_k$ and
$\Delta_{k+1}=\min\{\gamma_e\Delta_k,\Delta_{\max}\}$.
\end{enumerate}
\end{algorithm}

An iteration is \emph{successful} if $\rho_k\ge\eta_1$ and
\emph{very successful} if $\rho_k\ge\eta_2$.
Proposition~\ref{prop:screening} ensures $\chi_k>0$ whenever a trial is
formed. Consequently, \eqref{eq:cauchy_decrease} and
\eqref{eq:improved_step} give $\pred_k(d_k)>0$, so the ratio is defined.
Every iterate and trial point has finite objective, and every
successful iteration decreases $F$.
The exact step and $\chi$ appear in the analysis, but their exact
values are not required by the algorithm. The stopping test uses the gap
and $U_\tau$; acceptance uses actual and predicted reduction. Neither
$F_{\rm low}$ nor the Lipschitz constant $L_f$ is an algorithmic input.

\subsection{Model agreement and lower bounds for the radius}
Throughout the remaining analysis, assume
$\norm{B_k}\le b_{\max}<\infty$. The usual trust-region comparison between
actual and predicted reduction applies because the nonsmooth term is
included exactly in the model; see also \citet{ConnGouldToint2000}.

\begin{lemma}[Model agreement]\label{lem:agreement}
For every trial step,
\begin{equation}\label{eq:agreement}
 \bigl|F(w^k)-F(w^k+d_k)-\pred_k(d_k)\bigr|
 \le\frac{L_f+b_{\max}}2\norm{d_k}_W^2.
\end{equation}
In particular, let
\begin{equation}\label{eq:radius_threshold}
 c\defeq\min\left\{\frac1{1+b_{\max}},
             \frac{2\kappa(1-\eta_2)}{1+L_f+b_{\max}}\right\}>0.
\end{equation}
Every iteration satisfying $\Delta_k\le c\chi_k$ is very successful.
\end{lemma}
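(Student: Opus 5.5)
The plan has two parts: first the agreement estimate \eqref{eq:agreement}, then the very-successful conclusion derived from it together with the Cauchy decrease bound.

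\textbf{Agreement estimate.} The nonsmooth term $\varphi(w^k+d_k)$ appears identically in $F(w^k+d_k)$ and in $m_k(d_k)$, and $\varphi(w^k)$ appears in both $F(w^k)$ and $m_k(0)$. All four values are finite, since $w^k$ and $w^k+d_k$ lie in $\dom\varphi$ by \eqref{eq:improved_step}. The $\varphi$-terms therefore cancel exactly, and the difference reduces to
\[
 F(w^k)-F(w^k+d_k)-\pred_k(d_k)
 =-\bigl(f(w^k+d_k)-f(w^k)-\ip{\nabla f(w^k)}{d_k}\bigr)+\tfrac12\ip{B_kd_k}{d_k}.
\]
I would bound the first bracket by $\tfrac{L_f}{2}\norm{d_k}_W^2$ using the integral form of the mean-value theorem along the segment, which is valid under Assumption~\ref{ass:basic}:
\[
 f(w^k+d_k)-f(w^k)-\ip{\nabla f(w^k)}{d_k}
 =\int_0^1\ip{\nabla f(w^k+\theta d_k)-\nabla f(w^k)}{d_k}\dd\theta,
\]
followed by Lipschitz continuity of $\nabla f$. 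The quadratic term is bounded by $\tfrac{b_{\max}}2\norm{d_k}_W^2$. The triangle inequality then gives \eqref{eq:agreement}.

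\textbf{Very successful iterations.} Assume $\Delta_k\le c\chi_k$. The only care needed is to select the correct branch of the minimum in \eqref{eq:cauchy_decrease}. Since $c\le 1/(1+b_{\max})\le 1/(1+\norm{B_k})$, we have $\Delta_k\le \chi_k/(1+\norm{B_k})$. Hence the minimum equals $\Delta_k$, and with \eqref{eq:improved_step} this gives $\pred_k(d_k)\ge\kappa\chi_k\Delta_k>0$.

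Combining this with \eqref{eq:agreement} and $\norm{d_k}_W\le\Delta_k$ yields
\[
 |\rho_k-1|\le\frac{(L_f+b_{\max})\Delta_k^2}{2\kappa\chi_k\Delta_k}
 =\frac{(L_f+b_{\max})\Delta_k}{2\kappa\chi_k}
 \le\frac{(L_f+b_{\max})\,c}{2\kappa}.
\]
By the second entry of the minimum in \eqref{eq:radius_threshold}, the right-hand side is at most $(1-\eta_2)(L_f+b_{\max})/(1+L_f+b_{\max})<1-\eta_2$. Therefore $\rho_k>\eta_2$, so the iteration is very successful. The "$+1$" in the denominator of $c$ simply makes the second constant finite even when $L_f+b_{\max}=0$, and gives strict inequality.

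The main point of care is this case selection in the minimum of \eqref{eq:cauchy_decrease}. The rest is routine, using $\chi_k>0$ from Proposition~\ref{prop:screening} whenever a trial is formed.
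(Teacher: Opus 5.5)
Your proposal is correct and follows the paper's proof essentially step for step. You cancel the $\varphi$ terms, bound the smooth remainder by $L_f\norm{d_k}_W^2/2$ and the quadratic by $b_{\max}\norm{d_k}_W^2/2$, use $\pred_k(d_k)\ge\kappa\chi_k\Delta_k$, and conclude $|1-\rho_k|\le(1-\eta_2)(L_f+b_{\max})/(1+L_f+b_{\max})$. The paper also leaves implicit a detail you state explicitly: that $c\le 1/(1+b_{\max})$ selects the $\Delta_k$ branch of the minimum in \eqref{eq:cauchy_decrease}.
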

\begin{proof}
The nonsmooth terms cancel in actual minus predicted reduction.
Integration of $\nabla f$ along the trial segment gives
$|f(w^k+d_k)-f(w^k)-\ip{\nabla f(w^k)}{d_k}|
\le L_f\norm{d_k}_W^2/2$.
Adding $|\ip{B_kd_k}{d_k}|/2\le b_{\max}\norm{d_k}_W^2/2$
proves \eqref{eq:agreement}. If $\Delta_k\le c\chi_k$, then
\eqref{eq:cauchy_decrease} and \eqref{eq:improved_step} give
$\pred_k(d_k)\ge\kappa\chi_k\Delta_k$. Consequently,
\[
 |1-\rho_k|\le
 \frac{(L_f+b_{\max})\Delta_k}{2\kappa\chi_k}
 \le(1-\eta_2)\frac{L_f+b_{\max}}{1+L_f+b_{\max}}
 \le1-\eta_2,
\]
and the trial is very successful.
\end{proof}

\begin{lemma}[Radius bound away from stationarity]\label{lem:radius_floor}
If $\chi_k\ge\delta>0$ for $k=K,\ldots,L-1$, then
\begin{equation}\label{eq:radius_floor}
 \Delta_k\ge\min\{\Delta_K,\gamma_c c\delta\}
 \qquad (K\le k\le L).
\end{equation}
\end{lemma}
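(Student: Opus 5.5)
We argue by induction on $k$ from $K$ to $L$, using Lemma~\ref{lem:agreement} and the radius update in Algorithm~\ref{alg:outer}. The base case $k=K$ is immediate, since $\Delta_K\ge\min\{\Delta_K,\gamma_c c\delta\}$. For the inductive step, fix $K\le k\le L-1$ and assume $\Delta_k\ge\min\{\Delta_K,\gamma_c c\delta\}$. Since $\chi_k\ge\delta>0$ on this range, Algorithm~\ref{alg:outer} forms a trial at iteration $k$, so the update of Step~4 applies. (If the method had terminated earlier, there would be nothing more to prove.)

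We split into two cases according to the size of $\Delta_k$.

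\emph{Case $\Delta_k\le c\delta$.} Then $\Delta_k\le c\chi_k$, so by Lemma~\ref{lem:agreement} iteration $k$ is very successful. Hence $\Delta_{k+1}=\min\{\gamma_e\Delta_k,\Delta_{\max}\}$. Because $\gamma_e>1$ and $\Delta_k\le\Delta_{\max}$ (all radii stay at most $\Delta_{\max}$, given $\Delta_0\le\Delta_{\max}$ and the update rules), this gives $\Delta_{k+1}\ge\Delta_k$. The induction hypothesis then yields the bound at $k+1$.

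\emph{Case $\Delta_k>c\delta$.} Every branch of the update gives $\Delta_{k+1}\ge\gamma_c\Delta_k$, and so $\Delta_{k+1}>\gamma_c c\delta\ge\min\{\Delta_K,\gamma_c c\delta\}$.

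The two cases together close the induction and establish \eqref{eq:radius_floor} for all $K\le k\le L$. The index $k=L$ needs no hypothesis on $\chi_L$, because it is reached from $k=L-1$.

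No step here is a real obstacle. The only points needing care are the invariant $\Delta_k\le\Delta_{\max}$, which is what makes the enlargement branch nondecreasing, and the index bookkeeping: the hypothesis is used only for $k\le L-1$, while the conclusion extends to $k=L$.
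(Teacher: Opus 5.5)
Your proof is correct and is essentially the paper's argument. The paper splits on whether the trial is accepted, using the contrapositive of Lemma~\ref{lem:agreement} to get $\Delta_k>c\chi_k\ge c\delta$ at rejected trials; you split on $\Delta_k\le c\delta$ versus $\Delta_k>c\delta$, which is the same induction with the cases arranged differently, and your explicit invariant $\Delta_k\le\Delta_{\max}$ justifies the paper's unstated claim that successful trials do not decrease the radius.
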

\begin{proof}
Successful trials do not decrease the radius. At an unsuccessful trial
with $K\le k<L$, Lemma~\ref{lem:agreement} gives
$\Delta_k>c\chi_k\ge c\delta$, so
$\Delta_{k+1}=\gamma_c\Delta_k>\gamma_c c\delta$.
Starting from $\Delta_K$, induction proves the bound through index $L$;
no condition on $\chi_L$ is needed.
\end{proof}

\subsection{Convergence to stationarity}
We first analyze the step construction and radius updates without
imposing the positive-tolerance stopping test. Specifically, we show
that any infinite sequence of nonstationary iterates satisfying these
rules has $\chi(w^k)\to0$. This does not require an inner procedure
to certify exact stationarity in finitely many iterations.

\begin{theorem}[Global convergence]\label{thm:global}
Suppose Assumptions~\ref{ass:basic} and \ref{ass:oracle} hold and
$\norm{B_k}\le b_{\max}$. Let an infinite sequence satisfy 
$w^0\in\dom\varphi$, $0<\Delta_0\le\Delta_{\max}$, $\chi(w^k)>0$,
Algorithm~\ref{alg:cauchy}, \eqref{eq:improved_step}, and the ratio and
update rules in steps 3--4 of Algorithm~\ref{alg:outer}. Then
\begin{equation}\label{eq:global_chi}
 \chi(w^k)\longrightarrow0.
\end{equation}
Every strong accumulation point is stationary. In finite dimensions,
every accumulation point is therefore stationary, and boundedness of
the sequence guarantees the existence of at least one such point.
\end{theorem}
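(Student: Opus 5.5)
The plan follows the classical two-stage trust-region argument: first show that $\liminf\chi(w^k)=0$, then upgrade this to a full limit using the Lipschitz continuity of $\chi$ from Lemma~\ref{lem:chi}. Throughout, $F$ decreases monotonically on successful iterations and is bounded below by $F_{\rm low}$. Every iterate lies in $\dom\varphi$ by Theorem~\ref{thm:cauchy} and \eqref{eq:improved_step}, so the bound from Assumption~\ref{ass:basic} applies.

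\emph{Step 1 (liminf).} Suppose $\chi(w^k)\ge\delta>0$ for all $k\ge K$. Lemma~\ref{lem:radius_floor} gives $\Delta_k\ge\Delta_{\min}\defeq\min\{\Delta_K,\gamma_c c\delta\}$ for all $k\ge K$.

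\begin{itemize}
\item If there are infinitely many successful iterations, each one yields
\[
 F(w^k)-F(w^{k+1})\ge\eta_1\kappa\delta\min\{\delta/(1+b_{\max}),\Delta_{\min}\}
\]
by \eqref{eq:cauchy_decrease} and \eqref{eq:improved_step}. Summing these contradicts $F\ge F_{\rm low}$.
\item If only finitely many iterations succeed, then eventually every iteration fails. The radius then shrinks geometrically to zero, contradicting the positive lower bound on $\Delta_k$.
\end{itemize}

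Hence $\liminf\chi(w^k)=0$.

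\emph{Step 2 (full limit).} This is the main obstacle. Suppose instead that $\chi(w^{k_i})\ge2\delta$ along a subsequence. For each $k_i$, let $\ell_i>k_i$ be the first index with $\chi(w^{\ell_i})<\delta$; it exists by Step 1. On the range $k_i\le k<\ell_i$ we have $\chi_k\ge\delta$.

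For each successful $k$ in that range, \eqref{eq:cauchy_decrease} gives
\[
 F(w^k)-F(w^{k+1})\ge\eta_1\kappa\delta\min\{\delta/(1+b_{\max}),\norm{w^{k+1}-w^k}_W\},
\]
using $\norm{d_k}_W\le\Delta_k$ and $w^{k+1}-w^k=d_k$. Unsuccessful steps do not move the iterate. Since $F(w^k)$ is nonincreasing and bounded below, it converges, so the total decrease over $[k_i,\ell_i)$ tends to zero as $i\to\infty$.

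For large $i$, the decrease on each single successful step is smaller than $\eta_1\kappa\delta^2/(1+b_{\max})$. The minimum in the bound is therefore attained by the step length. Summing over the range gives
\[
 \norm{w^{\ell_i}-w^{k_i}}_W\le\frac{F(w^{k_i})-F(w^{\ell_i})}{\eta_1\kappa\delta}\to0.
\]
Lemma~\ref{lem:chi} then gives $|\chi(w^{\ell_i})-\chi(w^{k_i})|\le L_\chi\norm{w^{\ell_i}-w^{k_i}}_W\to0$. This contradicts $\chi(w^{k_i})-\chi(w^{\ell_i})>\delta$, which proves \eqref{eq:global_chi}.

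\emph{Step 3 (accumulation points).} If $w^{k_i}\to\bar w$ strongly, Lipschitz continuity of $\chi$ gives $\chi(\bar w)=\lim\chi(w^{k_i})=0$. Lemma~\ref{lem:chi} then shows that $\bar w\in\dom\varphi$ and $0\in f'(\bar w)+\partial\varphi(\bar w)$. In finite dimensions every accumulation point is a strong one, so it is stationary. Bolzano--Weierstrass supplies at least one accumulation point when the sequence is bounded.
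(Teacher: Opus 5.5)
Your proof is correct and follows the paper's argument essentially step for step. Step~1 combines the radius floor of Lemma~\ref{lem:radius_floor} with Cauchy decrease to get $\liminf_k\chi_k=0$. Step~2 sums the movement--descent inequality over intervals where $\chi$ falls from at least $2\delta$ to below $\delta$; Lipschitz continuity of $\chi$ then contradicts convergence of $F(w^k)$.

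The differences are cosmetic. You treat the finitely-many-successes case inside the contradiction hypothesis, whereas the paper first proves separately that there are infinitely many successful iterations. You also read off $\norm{w^{k+1}-w^k}_W$ directly as the active term of the minimum, instead of first showing $\Delta_k\le\delta/(1+b_{\max})$ for late successful trials.
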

\begin{proof}
The objective values decrease monotonically to a finite limit.
There are infinitely many successful trials: otherwise $w^k$ and its
positive criticality would eventually be fixed while repeated rejection
forces $\Delta_k\to0$, contradicting Lemma~\ref{lem:agreement}.
If $\chi_k\ge\delta>0$ for every $k\ge K$.
Lemma~\ref{lem:radius_floor} gives
$\Delta_k\ge\min\{\Delta_K,\gamma_c c\delta\}>0$ for all $k\ge K$.
At every successful iteration with $k \ge K$, the acceptance condition ($\rho_k \ge \eta_1$),
\eqref{eq:cauchy_decrease}, and \eqref{eq:improved_step} give
\[
 F(w^k)-F(w^{k+1})\ge\eta_1\kappa\delta
 \min\left\{\frac{\delta}{1+b_{\max}},\Delta_K,
                    \gamma_c c\delta\right\}>0.
\]
Infinitely many such decreases contradict the lower bound on $F$.
Thus $\liminf_k\chi_k=0$.

Fix $\delta>0$. At a successful trial with $\chi_k\ge\delta$,
\[
 F(w^k)-F(w^{k+1})\ge\eta_1\kappa\delta
 \min\left\{\frac{\delta}{1+b_{\max}},\Delta_k\right\}.
\]
Since successive objective decreases tend to zero, all sufficiently
late such trials have $\Delta_k\le\delta/(1+b_{\max})$. Hence
using $w^{k+1}-w^k=d_k$ and $\norm{d_k}_W\le\Delta_k$
\begin{equation}\label{eq:movement_descent}
 F(w^k)-F(w^{k+1})\ge\eta_1\kappa\delta\Delta_k
 \ge\eta_1\kappa\delta\norm{w^{k+1}-w^k}_W.
\end{equation}
The final inequality between objective decrease and movement also holds
at rejected trials, where both quantities vanish.

If $\limsup_k\chi_k>0$, choose $\delta>0$ with
$2\delta<\limsup_k\chi_k$. Because $\liminf_k\chi_k=0$, there are
arbitrarily late intervals $[i,j]$ with $\chi_i\ge2\delta$,
$\chi_j<\delta$, and $\chi_k\ge\delta$ for $i\le k<j$.
Lipschitz continuity of $\chi$ gives
\[
 \sum_{k=i}^{j-1}\norm{w^{k+1}-w^k}_W
 \ge\frac{|\chi_i-\chi_j|}{L_\chi}>\frac{\delta}{L_\chi}.
\]
Summing \eqref{eq:movement_descent} on a sufficiently late interval gives
$F(w^i)-F(w^j)>\eta_1\kappa\delta^2/L_\chi$.
This contradicts convergence of $F(w^k)$, proving
\eqref{eq:global_chi}. 
Continuity of $\chi$ and Lemma~\ref{lem:chi} make every strong
accumulation point stationary, including its membership in
$\dom\varphi$. In finite dimensions, every bounded sequence has a
convergent subsequence.
\end{proof}

The theorem concerns strong accumulation points. Weak subsequential
compactness alone does not justify passage to the limit in the generally
nonlinear proximal stationarity map. Additional assumptions would be
needed to deduce stationarity of arbitrary weak accumulation points.

\subsection{Finite termination and number of outer iterations}

An outer trial is one execution of steps 2--4 of
Algorithm~\ref{alg:outer}; both accepted and rejected trials are counted.
Any optional search for an alternative trial step is terminated after
finitely many iterations, using $d_k=s_k^{\rm C}$ if no admissible
alternative has been found.

\begin{theorem}[Finite termination for a positive tolerance]\label{thm:complexity}
Suppose Assumptions~\ref{ass:basic} and \ref{ass:oracle} hold.
Fix the initial point, radii, algorithmic parameters, and uniform bound
$\norm{B_k}\le b_{\max}$ independently of $\varepsilon$.
For every $0<\varepsilon\le1$, Algorithm~\ref{alg:outer} terminates
finitely and returns a point with $\chi(w)\le\varepsilon$.
The number of outer trials is $O(\varepsilon^{-2})$ as
$\varepsilon\downarrow0$. This estimate does not count inner proximal
iterations or applications of $T$ and $T^*$.
\end{theorem}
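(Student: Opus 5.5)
Finite termination of each outer iteration already follows from Proposition~\ref{prop:screening} and Theorem~\ref{thm:cauchy}. Step~1 of Algorithm~\ref{alg:outer} stops after finitely many inner iterations. Step~2 runs a finite Cauchy search, and any alternative-step search is truncated by convention. So the whole algorithm terminates once we show that only finitely many outer trials occur. The returned point satisfies $\chi(w)\le U_\tau\le\varepsilon$ by \eqref{eq:upper_criticality} and \eqref{eq:gap_distance}. It therefore remains to bound the number of trials performed before stopping, and the plan is the standard trust-region counting argument run with $\delta\defeq c_\sigma\varepsilon$.

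\emph{Radius floor.} Whenever a trial is formed, Proposition~\ref{prop:screening} gives $\chi_k>c_\sigma\varepsilon=\delta$. Lemma~\ref{lem:radius_floor} with $K=0$ then yields
\[
\Delta_k\ge\Delta_{\min}\defeq\min\{\Delta_0,\gamma_c c\,c_\sigma\varepsilon\}
\]
for every trial index. Since $\varepsilon\le1$ and the parameters are fixed, $\Delta_{\min}\ge C_1\varepsilon$ for a constant $C_1$ independent of $\varepsilon$.

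\emph{Successful trials.} At each successful trial, \eqref{eq:cauchy_decrease} and \eqref{eq:improved_step} give
\[
F(w^k)-F(w^{k+1})\ge\eta_1\kappa\delta\min\{\delta/(1+b_{\max}),\Delta_{\min}\}\ge C_2\varepsilon^2 .
\]
Since $F$ is nonincreasing and bounded below by $F_{\rm low}$, the number $N_s$ of successful trials satisfies
\[
N_s\le (F(w^0)-F_{\rm low})/(C_2\varepsilon^2).
\]

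\emph{Unsuccessful trials.} This is the main bookkeeping step. Each unsuccessful trial multiplies the radius by $\gamma_c$, and each successful one multiplies it by at most $\gamma_e$. Over the first $N$ trials, with $N_u$ unsuccessful, we therefore have
\[
\Delta_{\min}\le\Delta_N\le\Delta_0\gamma_e^{N_s}\gamma_c^{N_u}.
\]
Taking logarithms gives
\[
N_u\le \frac{\log\gamma_e}{|\log\gamma_c|}N_s+\frac{\log(\Delta_0/\Delta_{\min})}{|\log\gamma_c|}.
\]
The last term is $O(\log(1/\varepsilon))$, which is $O(\varepsilon^{-2})$. Hence the total number of trials $N_s+N_u$ is $O(\varepsilon^{-2})$. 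This also proves finiteness: if the algorithm ran forever, $N_s$ would be bounded and all remaining trials unsuccessful, which would drive $\Delta_k$ below $\Delta_{\min}$, a contradiction.

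The one delicate point is that Lemma~\ref{lem:radius_floor} needs $\chi_k\ge\delta$ at every index from $0$ up to the current one, not merely at the terminal index. This holds because each index $k$ at which a trial is formed has passed the stationarity test, and that guarantees $\chi_k>\delta$.
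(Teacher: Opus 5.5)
Your proposal is correct and follows essentially the same argument as the paper: a radius floor $\min\{\Delta_0,\gamma_c c\,c_\sigma\varepsilon\}$ from Lemma~\ref{lem:radius_floor} with $\delta=c_\sigma\varepsilon$, a decrease of order $\varepsilon^2$ per successful trial that bounds $N_s$, and the product of radius factors that bounds $N_u$ logarithmically. The only cosmetic difference is how you argue finiteness. You observe that bounded $N_s$ plus infinitely many contractions would push the radius below the floor, whereas the paper notes that its bound on every initial segment of $K$ trials is independent of $K$; the two arguments are equivalent.
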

\begin{proof}
Every reference call terminates finitely by
Proposition~\ref{prop:screening}. If it returns a relative certificate,
then $\chi_k>c_\sigma\varepsilon>0$, and the Cauchy search is finite
by Theorem~\ref{thm:cauchy}.

Consider any initial segment of $K$ completed trials. The continuation
bound and Lemma~\ref{lem:radius_floor} give
\begin{equation}\label{eq:complexity_radius_bound}
 \Delta_k\ge\underline{\Delta}_\varepsilon
 \defeq \min\{\Delta_0,\gamma_c c c_\sigma\varepsilon\}>0,
 \qquad 0\le k\le K.
\end{equation}
For $0<\varepsilon\le1$,
$\underline{\Delta}_\varepsilon
\ge\varepsilon\min\{\Delta_0,\gamma_c c c_\sigma\}$.
Thus each successful trial decreases $F$ by at least
$C_s\varepsilon^2$, where
\begin{equation}\label{eq:complexity_constants}
 C_s\defeq \eta_1\kappa c_\sigma
 \min\left\{\frac{c_\sigma}{1+b_{\max}},\Delta_0,
                         \gamma_c c c_\sigma\right\}>0.
\end{equation}
This constant is independent of both $\varepsilon$ and $K$.

Let $N_s$ and $N_u$ denote the numbers of successful and unsuccessful
trials in this segment, so that $K=N_s+N_u$.
Rejected trials leave the objective unchanged, while each successful
trial decreases it by at least $C_s\varepsilon^2$. Summing gives
\[
 N_sC_s\varepsilon^2
 \le
 \sum_{k=0}^{K-1}\bigl[F(w^k)-F(w^{k+1})\bigr]
 =
 F(w^0)-F(w^K)
 \le F(w^0)-F_{\rm low}.
\]

To bound the unsuccessful trials, use the radius updates.
Each unsuccessful trial gives
$\Delta_{k+1}=\gamma_c\Delta_k$.
Each successful trial gives
$\Delta_{k+1}\le\gamma_e\Delta_k$, whether the radius stays unchanged
or increases subject to the upper bound $\Delta_{\max}$.
Multiplying these inequalities over the $K$ trials and using
\eqref{eq:complexity_radius_bound}, we obtain
\[
 \underline{\Delta}_\varepsilon
 \le\Delta_K
 \le\Delta_0\gamma_c^{N_u}\gamma_e^{N_s}.
\]
Taking logarithms and using
$\log\gamma_c=-|\log\gamma_c|$ yields
\[
 N_u|\log\gamma_c|
 \le
 \log(\Delta_0/\underline{\Delta}_\varepsilon)
 +N_s\log\gamma_e.
\]
Together with the objective-decrease estimate, this proves
\begin{equation}\label{eq:complexity}
 \begin{aligned}
 N_s&\le
 \frac{F(w^0)-F_{\rm low}}{C_s\varepsilon^2},\\
 N_u&\le
 \frac{\log(\Delta_0/\underline{\Delta}_\varepsilon)
                  +N_s\log\gamma_e}
      {|\log\gamma_c|}.
 \end{aligned}
\end{equation}
Consequently,
\begin{equation}\label{eq:total_outer_trials}
 \begin{aligned}
 K=N_s+N_u\le{}&
 \left(1+\frac{\log\gamma_e}{|\log\gamma_c|}\right)
 \frac{F(w^0)-F_{\rm low}}{C_s\varepsilon^2}
 +\frac{\log(\Delta_0/\underline{\Delta}_\varepsilon)}
       {|\log\gamma_c|}.
 \end{aligned}
\end{equation}
For fixed $\varepsilon>0$, this bound is finite and independent of
$K$, so an infinite sequence of completed trials is impossible.
Reference calls, Cauchy searches, and the optional step search all take
finite work. The algorithm therefore terminates at the absolute test,
which gives $\chi(w)\le U_\tau\le\varepsilon$.
Finally, $\underline{\Delta}_\varepsilon
=\gamma_c c c_\sigma\varepsilon$ for sufficiently small
$\varepsilon$, so the logarithmic term is
$O(1+|\log\varepsilon|)$. Equation~\eqref{eq:total_outer_trials}
proves the stated $O(\varepsilon^{-2})$ estimate.
\end{proof}

The $O(\varepsilon^{-2})$ dependence is the standard first-order
iteration bound for trust-region methods
\citep{ConnGouldToint2000,BaraldiKouri2023}.
The relative gap test gives the
required model decrease and stationarity bounds while allowing a nonzero
Fenchel defect. It requires neither an exact composite proximal map nor
exact minimization of the possibly nonconvex model.

\section{A dual proximal-gradient inner solver}
\label{sec:inner}

We now verify Assumption~\ref{ass:oracle} for globally Lipschitz
regularizers using dual proximal-gradient iteration. Primal recovery
and gap estimates for linear compositions are classical
\citep[Proposition~2.2 and Theorem~6.1]{VillaSalzoBaldassarreVerri2013}.
Transferring an $O(j^{-1})$ dual objective estimate through the primal
distance gives an $O(j^{-1/2})$ gap estimate. We instead bound the gap
by a dual step and obtain $O(j^{-1})$, as in related finite-dimensional
analyses \citep[Lemma~4.4 and Theorem~4.1]{KimFessler2016}.
We give a Hilbert-space proof and an explicit bound for an averaged
primal candidate.

Throughout this section, $X$ is Hilbert, with Riesz map $\mathcal R_X$
and Hilbert adjoint
$T^{\star}\defeq\RW^{-1}T^*\mathcal R_X:X\to W$.
The variable $\mu$ denotes the Hilbert representative of a dual vector,
and $R^*(\mu)$ means $R^*(\mathcal R_X\mu)$.

\begin{assumption}[Regularizer and required evaluations]\label{ass:lipschitz_R}
The function $R:X\to\R$ is convex and globally Lipschitz with constant
$L_R<\infty$. Its values, finite conjugate values, and the proximal map
of $R^*$ in $X$ can be evaluated. A starting point
$\mu^0\in\dom R^*$ is available.
\end{assumption}

The iteration bounds below use a known $L_R$; the gap tests do not.
Norms and finite sums of boundedly weighted norms satisfy the regularity
hypothesis. So does the support function of a nonempty bounded closed
convex set, provided its support function and metric projection can be
evaluated: its conjugate is the indicator of that set. The assumption
excludes nontrivial indicator constraints on $Ty$.

For fixed $q,t$, minimize the negative dual objective (see \eqref{eq:primal_dual})
\begin{equation}\label{eq:dual_min}
 H(\mu)\defeq h(\mu)+R^*(\mu),\qquad
 h(\mu)\defeq\frac t2\norm{T^{\star}\mu}_W^2-(\mu,Tq)_X.
\end{equation}
Choose $\gamma>0$ such that $\gamma t\norm{T}^2\le1$.
A known upper bound for $\norm{T}$ suffices for this choice.
For $j=0,1,\ldots$, form the primal--dual pair using
\begin{equation}\label{eq:dual_iteration}
 \begin{aligned}
 y^j&=q-tT^{\star}\mu^j,\\
 \mu^{j+1}&=\prox_{\gamma R^*}
       \bigl(\mu^j+\gamma Ty^j\bigr).
 \end{aligned}
\end{equation}
Test $(y^j,\mu^j)$ before performing the dual update. Recovery makes
$r^j=t^{-1}\RW(y^j-q)+T^*\mathcal R_X\mu^j=0$, so
\[
 \mathcal G_{q,t}(y^j,\mu^j)
 =R(Ty^j)+R^*(\mu^j)-(\mu^j,Ty^j)_X.
\]
Every gap is finite because $R$ is finite and the proximal update
preserves $\dom R^*$. It vanishes precisely when
$\mu^j\in\partial R(Ty^j)$, which need not occur at a finite index.

\begin{lemma}[Dual attainment and primal recovery]\label{lem:dual_attainment}
Under Assumption~\ref{ass:lipschitz_R}, $H$ has a minimizer $\mu_\star$,
$\norm{\mu_\star}_X\le L_R$, and
$y_\star=q-tT^{\star}\mu_\star$ is the unique minimizer of
$\Psi_{q,t}$. Moreover, for every $\mu\in\dom R^*$,
\begin{equation}\label{eq:dual_primal_distance}
 H(\mu)-H(\mu_\star)
 \ge\frac t2\norm{T^{{\star}}(\mu-\mu_\star)}_W^2.
\end{equation}
\end{lemma}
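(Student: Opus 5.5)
Throughout, $X$ is Hilbert and $R$ is finite with Lipschitz constant $L_R$. The plan is to read everything off the conjugate structure of such an $R$ and then use Fenchel--Rockafellar duality. The argument has three steps.

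\emph{Step 1: the domain of $R^*$ is bounded.} If $\norm{\mu}_X>L_R$, take $x=\lambda\mu$ with $\lambda\to\infty$. Then
\[
 (\mu,x)_X-R(x)\ge\lambda\norm{\mu}_X^2-R(0)-\lambda L_R\norm{\mu}_X\to\infty .
\]
Hence $\dom R^*$ lies in the closed ball of radius $L_R$. Moreover $R^*$ is proper, convex, and weakly lower semicontinuous, and $h$ is continuous and convex. So $H$ is weakly lower semicontinuous with bounded domain, and $H$ is proper because $\mu^0\in\dom R^*$. The ball is weakly compact, so $H$ has a minimizer $\mu_\star$ with $\norm{\mu_\star}_X\le L_R$.

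\emph{Step 2: zero gap and recovery.} I would show $\mathcal G_{q,t}(y_\star,\mu_\star)=0$ for $y_\star=q-tT^\star\mu_\star$. Since $R$ is finite and continuous, Fenchel--Rockafellar duality applied to $\Psi_{q,t}$ holds with dual attainment, so I expect the optimal dual value to be attained. Alternatively, argue directly from the optimality condition $0\in\partial h(\mu_\star)+\partial R^*(\mu_\star)$. The sum rule applies because $h$ is continuous everywhere. This gives $Tq-tTT^\star\mu_\star\in\partial R^*(\mu_\star)$, that is, $Ty_\star\in\partial R^*(\mu_\star)$. By Fenchel equality this is equivalent to $\mu_\star\in\partial R(Ty_\star)$, so $\varepsilon_R=0$. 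The recovery formula makes the residual $r$ vanish. By the gap identity \eqref{eq:gap_identity} the gap is then zero. Finally \eqref{eq:gap_distance} with gap zero forces $y_\star$ to be the minimizer of $\Psi_{q,t}$, which is unique by strong convexity. I prefer the direct subdifferential route, since it avoids invoking a separate duality theorem; the only care needed is the sum rule, and continuity of $h$ suffices for it.

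\emph{Step 3: the growth estimate \eqref{eq:dual_primal_distance}.} Write
\[
 H(\mu)-H(\mu_\star)=\bigl[h(\mu)-h(\mu_\star)-(\nabla h(\mu_\star),\mu-\mu_\star)_X\bigr]+\bigl[R^*(\mu)-R^*(\mu_\star)+(\nabla h(\mu_\star),\mu-\mu_\star)_X\bigr].
\]
The first bracket equals $\tfrac t2\norm{T^\star(\mu-\mu_\star)}_W^2$ exactly, because $h$ is quadratic. The second bracket is nonnegative because $-\nabla h(\mu_\star)\in\partial R^*(\mu_\star)$ by Step 2. Equivalently, this is the last line of \eqref{eq:exact_error_identity}, noting $-\nabla h(\mu_\star)=Ty_\star$.

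The main obstacle is Step 2, that is, justifying the optimality inclusion (or strong duality) in infinite dimensions without extra qualification. The continuity of $h$ on all of $X$ is the point I would rely on to make the sum rule valid.
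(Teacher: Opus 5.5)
Your proposal is correct and follows the paper's proof essentially step for step: the Lipschitz bound confines $\dom R^*$ to the ball of radius $L_R$, weak compactness gives attainment, the sum rule (valid because $h$ is continuous) yields $Ty_\star\in\partial R^*(\mu_\star)$ and hence a zero recovered gap, and the quadratic expansion of $h$ combined with the subgradient inequality for $R^*$ gives \eqref{eq:dual_primal_distance}. The only cosmetic difference is that you apply Weierstrass directly on the weakly compact ball, whereas the paper takes a bounded minimizing sequence and uses the explicit lower bound $H\ge -R(0)-L_R\norm{Tq}_X$.
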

\begin{proof}
If $\norm{\mu}_X>L_R$, testing the conjugate at
$x=s\mu/\norm{\mu}_X$ gives
$R^*(\mu)\ge s(\norm{\mu}_X-L_R)-R(0)\to+\infty$ as
$s\to\infty$. Thus $\dom R^*\subseteq\{\norm{\mu}_X\le L_R\}$.
Also $R^*(\mu)\ge-R(0)$, so
$H(\mu)\ge-R(0)-L_R\norm{Tq}_X$ on its domain. Since $H(\mu^0)$
is finite, a minimizing sequence with finite values exists and is
bounded. Hilbert reflexivity gives a weakly convergent subsequence.
Both $R^*$, a supremum of weakly continuous affine functions, and $h$
are weakly lower semicontinuous. Hence its limit $\mu_\star$ minimizes
$H$ and has norm at most $L_R$.

The convex sum rule gives
$-\nabla h(\mu_\star)=Ty_\star\in\partial R^*(\mu_\star)$, where
$y_\star=q-tT^{{\star}}\mu_\star$. Equivalently,
$\mu_\star\in\partial R(Ty_\star)$. The recovered residual and
Fenchel defect both vanish, so the gap is zero and
Proposition~\ref{prop:gap} identifies $y_\star$ as the unique primal
minimizer. Finally, the quadratic expansion of $h$ at $\mu_\star$
and the subgradient inequality for $R^*$ cancel their linear terms
and give \eqref{eq:dual_primal_distance}.
\end{proof}

\begin{theorem}[Vanishing gaps and finite inner termination]\label{thm:inner}
Under Assumption~\ref{ass:lipschitz_R}, iteration
\eqref{eq:dual_iteration}, with $\gamma t\norm{T}^2\le1$,
produces finite gaps tending to zero. For every $j\ge1$,
\begin{align}
 0\le H(\mu^j)-H(\mu_\star)
 &\le\frac{\norm{\mu^0-\mu_\star}_X^2}{2\gamma j},
 \label{eq:dual_rate}\\
 \norm{y^j-y_\star}_W
 &\le\sqrt{\frac{t}{\gamma j}}\norm{\mu^0-\mu_\star}_X,
 \label{eq:primal_rate}\\
 0\le\mathcal G_{q,t}(y^j,\mu^j)
 &\le\frac{2L_R}{\gamma}\norm{\mu^j-\mu^{j-1}}_X.
 \label{eq:gap_dual_step}
\end{align}
In particular, for every $j\ge2$,
\begin{equation}\label{eq:gap_rate}
 \mathcal G_{q,t}(y^j,\mu^j)
 \le\frac{4L_R\norm{\mu^0-\mu_\star}_X}{\gamma(j-1)}.
\end{equation}
Thus Assumption~\ref{ass:oracle} holds. Algorithm~\ref{alg:reference}
and every relative test at a nonstationary point terminate after
finitely many inner iterations.
\end{theorem}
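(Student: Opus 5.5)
The plan is to treat \eqref{eq:dual_iteration} as the proximal-gradient method applied to $H=h+R^*$ in the Hilbert space $X$. First I record the smooth part: $\nabla h(\mu)=tTT^{\star}\mu-Tq=-Ty$ whenever $y=q-tT^{\star}\mu$. Hence the update is exactly $\mu^{j+1}=\prox_{\gamma R^*}(\mu^j-\gamma\nabla h(\mu^j))$. The gradient $\nabla h$ is Lipschitz with constant $t\norm{T}^2\le1/\gamma$.

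\textbf{Dual rate and primal rate.} The bound \eqref{eq:dual_rate} is then the standard sublinear rate of the proximal-gradient method with step $\gamma\le1/L$. I would prove it in Hilbert space through the usual three-point inequality:
\[
H(\mu^{j+1})\le H(\mu)+\frac1{2\gamma}\bigl(\norm{\mu^j-\mu}_X^2-\norm{\mu^{j+1}-\mu}_X^2\bigr)\quad\text{for all }\mu\in\dom R^*.
\]
Taking $\mu=\mu^j$ shows that $H(\mu^j)$ is monotone. Taking $\mu=\mu_\star$ (from Lemma~\ref{lem:dual_attainment}) and telescoping gives \eqref{eq:dual_rate}. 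For \eqref{eq:primal_rate}, note that $y^j-y_\star=-tT^{\star}(\mu^j-\mu_\star)$. Then \eqref{eq:dual_primal_distance} gives
\[
\norm{y^j-y_\star}_W^2\le 2t\bigl(H(\mu^j)-H(\mu_\star)\bigr),
\]
and inserting \eqref{eq:dual_rate} finishes this part.

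\textbf{Gap bound by a dual step.} This is the main step, \eqref{eq:gap_dual_step}. Finite gaps follow as in the text, since the prox keeps $\mu^j\in\dom R^*$ and $R$ is finite. The optimality condition of the prox at step $j-1$ reads
\[
v^j\defeq Ty^{j-1}+\gamma^{-1}(\mu^{j-1}-\mu^j)\in\partial R^*(\mu^j).
\]
Equivalently, $\mu^j\in\partial R(v^j)$, so Fenchel equality holds at $(v^j,\mu^j)$. Using the recovered form of the gap and subtracting that Fenchel equality gives
\[
\mathcal G_{q,t}(y^j,\mu^j)=R(Ty^j)-R(v^j)-(\mu^j,Ty^j-v^j)_X\le 2L_R\norm{Ty^j-v^j}_X.
\]
Here I use that $R$ is $L_R$-Lipschitz and that $\norm{\mu^j}_X\le L_R$ (shown in the proof of Lemma~\ref{lem:dual_attainment}). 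It remains to bound $Ty^j-v^j$:
\[
Ty^j-v^j=-tTT^{\star}(\mu^j-\mu^{j-1})-\gamma^{-1}(\mu^{j-1}-\mu^j)=(\gamma^{-1}I-tTT^{\star})(\mu^j-\mu^{j-1}).
\]
The operator $\gamma^{-1}I-tTT^{\star}$ is self-adjoint with spectrum in $[0,\gamma^{-1}]$, because $\gamma t\norm{T}^2\le1$. Its norm is therefore at most $\gamma^{-1}$, which yields \eqref{eq:gap_dual_step}. This spectral bound is the point that needs care, since the naive estimate would only give $2/\gamma$.

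\textbf{Step decay and consequences.} For \eqref{eq:gap_rate} I need $\norm{\mu^j-\mu^{j-1}}_X\le2\norm{\mu^0-\mu_\star}_X/(j-1)$.
\begin{itemize}
\item The proximal-gradient step is an averaged operator, hence firmly nonexpansive-type. This makes $\norm{\mu^{j}-\mu^{j-1}}_X$ nonincreasing.
\item The three-point inequality with $\mu=\mu_\star$ gives $\norm{\mu^j-\mu_\star}_X\le\norm{\mu^0-\mu_\star}_X$.
\item Summing the step lengths, using the identity $\mu^j-\mu^{j-1}$ versus the Fejér decrement, would then give a Krasnosel'skii--Mann type $O(1/\sqrt j)$ bound.
\end{itemize}
That is too weak. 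I would instead use the sharper $O(1/j)$ bound for the prox-grad residual combined with the function gap, as in \citep{KimFessler2016}. The decrement inequality
\[
\frac1{2\gamma}\norm{\mu^{j}-\mu^{j-1}}_X^2\le H(\mu^{j-1})-H(\mu^j)
\]
holds only with the right constant. Combining it with monotonicity of steps, $\sum_{i\ge m}\bigl(H(\mu^i)-H(\mu_\star)\bigr)$, and the dual rate at index $\lceil j/2\rceil$ should give the constant $2/(j-1)$; matching that constant is where I expect difficulty.

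Finally, \eqref{eq:gap_rate} shows that the gaps tend to zero, so Assumption~\ref{ass:oracle} holds. Proposition~\ref{prop:screening} then gives finite termination of Algorithm~\ref{alg:reference} and of every relative test at a nonstationary point.
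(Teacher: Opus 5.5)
Your proposal follows the paper's proof essentially step for step. The shared steps are: the three-point inequality for $H=h+R^*$; the recovery identity $y^j-y_\star=-tT^{\star}(\mu^j-\mu_\star)$ combined with \eqref{eq:dual_primal_distance}; the Fenchel equality at $Ty^{j-1}+(\mu^{j-1}-\mu^j)/\gamma$ together with the spectral bound $\norm{\gamma^{-1}I-tTT^{\star}}\le\gamma^{-1}$; and monotone step lengths from nonexpansiveness of the update map.

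The step you left open, the $O(j^{-1})$ bound on $\norm{\mu^j-\mu^{j-1}}_X$, closes with no loss in the constant. Your own three-point inequality with $\mu=\mu^i$ already gives the decrement $\norm{\mu^{i+1}-\mu^i}_X^2/(2\gamma)\le H(\mu^i)-H(\mu^{i+1})$, with exactly the constant $1/(2\gamma)$. Take $j\ge2$ and $m=\lfloor j/2\rfloor$ (your $\lceil j/2\rceil$ works equally well), and write $D=\norm{\mu^0-\mu_\star}_X$. Telescope the decrement from $i=m$ to $j-1$, bound each term below by the last step using monotonicity, and apply \eqref{eq:dual_rate} at index $m$:
\[
 \frac{j-m}{2\gamma}\norm{\mu^j-\mu^{j-1}}_X^2
 \le\sum_{i=m}^{j-1}\frac{\norm{\mu^{i+1}-\mu^i}_X^2}{2\gamma}
 \le H(\mu^m)-H(\mu^j)
 \le H(\mu^m)-H(\mu_\star)
 \le\frac{D^2}{2\gamma m}.
\]
Since $m(j-m)\ge(j-1)^2/4$, this gives $\norm{\mu^j-\mu^{j-1}}_X\le 2D/(j-1)$, and \eqref{eq:gap_dual_step} then yields \eqref{eq:gap_rate}.

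The quantity to control is the telescoped decrement $H(\mu^m)-H(\mu^j)$, not a sum of function gaps $\sum_{i\ge m}\bigl(H(\mu^i)-H(\mu_\star)\bigr)$. If you bound each decrement by its function gap and sum the $O(i^{-1})$ rates over $m\le i<j$, you get only a constant times $\log 2$. That returns the $O(j^{-1/2})$ step bound you were trying to avoid.
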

\begin{proof}
Write $L\defeq t\norm{T}^2$ and $D\defeq\norm{\mu^0-\mu_\star}_X$.
For an update $\mu^+=\prox_{\gamma R^*}(\mu-\gamma\nabla h(\mu))$,
proximal optimality gives
$(\mu-\mu^+)/\gamma-\nabla h(\mu)\in\partial R^*(\mu^+)$.
Its subgradient inequality, convexity of $h$ at $\mu$, and the
$L$-smooth upper bound at $\mu^+$ yield, for $v\in\dom R^*$,
\begin{equation}\label{eq:dual_pg_inequality}
 \begin{aligned}
 H(\mu^+)-H(v)
 &\le\frac1\gamma(\mu-\mu^+,\mu^+-v)_X
       +\frac L2\norm{\mu^+-\mu}_X^2\\
 &=\frac{\norm{\mu-v}_X^2-\norm{\mu^+-v}_X^2}{2\gamma}
   -\frac{1-\gamma L}{2\gamma}\norm{\mu^+-\mu}_X^2.
 \end{aligned}
\end{equation}
Taking $v=\mu$ proves
\begin{equation}\label{eq:dual_step_descent}
 H(\mu)-H(\mu^+)
 \ge\left(\frac1\gamma-\frac L2\right)\norm{\mu^+-\mu}_X^2
 \ge\frac{\norm{\mu^+-\mu}_X^2}{2\gamma}.
\end{equation}
Taking $v=\mu_\star$ and summing the first $j$ updates gives
$\sum_{i=1}^j[H(\mu^i)-H(\mu_\star)]\le D^2/(2\gamma)$.
These errors are nonnegative and nonincreasing, proving
\eqref{eq:dual_rate}. Equation~\eqref{eq:dual_primal_distance} and
$y^j-y_\star=-tT^{{\star}}(\mu^j-\mu_\star)$ give
\eqref{eq:primal_rate}.

We next estimate the full gap directly from the preceding dual update.
For a fixed $j\ge1$, set 
$x\defeq Ty^{j-1}+(\mu^{j-1}-\mu^j)/\gamma$.
Proximal optimality gives $x\in\partial R^*(\mu^j)$,
equivalently $\mu^j\in\partial R(x)$, and hence
$R^*(\mu^j)=(\mu^j,x)_X-R(x)$.
The residual at the recovered pair is zero. Therefore
\[
\begin{aligned}
 \mathcal G_{q,t}(y^j,\mu^j)
 &=R(Ty^j)-R(x)-(\mu^j,Ty^j-x)_X\\
 &\le(L_R+\norm{\mu^j}_X)\norm{Ty^j-x}_X
 \le2L_R\norm{Ty^j-x}_X.
\end{aligned}
\]
The recovery formula also gives
$x-Ty^j=(\gamma^{-1}I-tTT^{{\star}})(\mu^{j-1}-\mu^j)$.
Since $tTT^{{\star}}$ is positive semidefinite and
$0\le\gamma tTT^{{\star}}\le I$ in the order of self-adjoint operators,
$\norm{\gamma^{-1}I-tTT^{{\star}}}\le\gamma^{-1} $ by the
spectral bound for positive self-adjoint operators.
This proves \eqref{eq:gap_dual_step}.

To obtain a bound in terms of $j$, observe that the affine map
$\mu\mapsto(I-\gamma tTT^{{\star}})\mu+\gamma Tq$ is
nonexpansive, as is $\prox_{\gamma R^*}$.
Their composition is the dual update map. Applying its nonexpansivity
to consecutive dual iterates $\mu^{i+1} = \prox_{\gamma R^*}\left((I-\gamma t TT^{{\star}})\mu^i + \gamma Tq \right)$ shows that
$\norm{\mu^{i+1}-\mu^i}_X$ is nonincreasing in $i$.
For $j\ge2$, let $m=\lfloor j/2\rfloor\ge1$.
Summing \eqref{eq:dual_step_descent} from $i=m$ to $j-1$ and then
using \eqref{eq:dual_rate} at index $m$ gives
\[
 \frac{j-m}{2\gamma}\norm{\mu^j-\mu^{j-1}}_X^2
 \le H(\mu^m)-H(\mu^j)
 \le\frac{\norm{\mu^0-\mu_\star}_X^2}{2\gamma m}.
\]
Because $m(j-m)\ge(j-1)^2/4$, it follows that
$\norm{\mu^j-\mu^{j-1}}_X
 \le2\norm{\mu^0-\mu_\star}_X/(j-1)$.
Together with \eqref{eq:gap_dual_step}, this proves \eqref{eq:gap_rate}.

Each $R^*(\mu^j)$ is finite by the initial choice and proximal
optimality, and each $R(Ty^j)$ is finite by
Assumption~\ref{ass:lipschitz_R}. Thus all gaps are finite, and
\eqref{eq:gap_rate} shows that they tend to zero.
Proposition~\ref{prop:screening} now gives finite termination of the
stated tests.
\end{proof}

The proof bounds the Fenchel defect directly using the bounded conjugate
domain. It does not require strong convergence of the dual iterates;
the primal points converge strongly by \eqref{eq:primal_rate}.

\subsection{An optional averaged primal candidate}

The average of the preceding recovered points, paired with the current
dual iterate, also gives a vanishing full gap. Its residual $r$ need
not vanish.

\begin{theorem}[Gap bound for an averaged primal candidate]
\label{thm:averaged_gap}
Under the hypotheses of Theorem~\ref{thm:inner}, define, for $j\ge1$,
\begin{equation}\label{eq:primal_average}
 \overline y^{\,j}\defeq\frac1j\sum_{i=0}^{j-1}y^i.
\end{equation}
Then
\begin{equation}\label{eq:averaged_gap_rate}
 0\le\mathcal G_{q,t}(\overline y^{\,j},\mu^j)
 \le\frac{(\norm{\mu^0}_X+L_R)^2}{2\gamma j}
 \le\frac{2L_R^2}{\gamma j}.
\end{equation}
In particular, the pairs $(\overline y^{\,j},\mu^j)$ also satisfy
Assumption~\ref{ass:oracle}.
\end{theorem}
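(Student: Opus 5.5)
The plan is to write the gap as a saddle-point gap of the Lagrangian and run the standard ergodic argument for the dual proximal-gradient step, with the primal average entering through convexity in $y$.

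Define, for $y\in W$ and $\nu\in\dom R^*$,
\[
 \mathcal L(y,\nu)\defeq\frac1{2t}\norm{y-q}_W^2+(\nu,Ty)_X-R^*(\nu).
\]

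\textbf{Step 1: express the gap through $\mathcal L$.}
Since $R$ is finite, convex and continuous, Fenchel--Moreau gives $R=R^{**}$. Hence $\Psi_{q,t}(y)=\sup_{\nu\in\dom R^*}\mathcal L(y,\nu)$. Also $D_{q,t}=-H$ by \eqref{eq:dual_min}, so
\[
 \mathcal G_{q,t}(\overline y^{\,j},\mu^j)=\sup_{\nu\in\dom R^*}\bigl[\mathcal L(\overline y^{\,j},\nu)+H(\mu^j)\bigr].
\]
Nonnegativity of the gap is already given by Proposition~\ref{prop:gap}, and finiteness follows from finiteness of $R$ and $\mu^j\in\dom R^*$. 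Since $\mathcal L(\cdot,\nu)$ is convex, $\mathcal L(\overline y^{\,j},\nu)\le\frac1j\sum_{i=0}^{j-1}\mathcal L(y^i,\nu)$. It therefore suffices to bound each $\mathcal L(y^i,\nu)+H(\mu^{i+1})$ by a telescoping term.

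\textbf{Step 2: the key identity and the one-step inequality.}
The recovered point satisfies $\nabla h(\mu^i)=tTT^\star\mu^i-Tq=-Ty^i$. Expanding $h(\mu^i)$ and using $(Ty^i,\mu^i)_X=(Tq,\mu^i)_X-t\norm{T^\star\mu^i}_W^2$, one checks
\[
 \mathcal L(y^i,\nu)=-\bigl[h(\mu^i)+(\nabla h(\mu^i),\nu-\mu^i)_X\bigr]-R^*(\nu).
\]
Thus $-\mathcal L(y^i,\nu)$ is exactly $H(\nu)$ with $h(\nu)$ replaced by its linearization at $\mu^i$. The derivation of \eqref{eq:dual_pg_inequality} replaced $h(v)$ by this linearization via convexity. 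I therefore rerun it without that convexity step, combining:
\begin{itemize}
\item the subgradient inequality for $R^*$ at $\mu^{i+1}$ with $(\mu^i-\mu^{i+1})/\gamma-\nabla h(\mu^i)\in\partial R^*(\mu^{i+1})$, and
\item the $L$-smooth upper bound for $h$ at $\mu^{i+1}$, with $\gamma L\le1$.
\end{itemize}
Together these give
\[
 \mathcal L(y^i,\nu)+H(\mu^{i+1})\le\frac{\norm{\mu^i-\nu}_X^2-\norm{\mu^{i+1}-\nu}_X^2}{2\gamma}.
\]

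\textbf{Step 3: sum and take the supremum.}
Sum the one-step inequality over $i=0,\dots,j-1$. By \eqref{eq:dual_step_descent}, $H(\mu^{i+1})\ge H(\mu^j)$, so
\[
 j\bigl[\mathcal L(\overline y^{\,j},\nu)+H(\mu^j)\bigr]\le\frac{\norm{\mu^0-\nu}_X^2}{2\gamma}.
\]
By the proof of Lemma~\ref{lem:dual_attainment}, $\dom R^*\subseteq\{\norm{\nu}_X\le L_R\}$. Hence $\norm{\mu^0-\nu}_X\le\norm{\mu^0}_X+L_R$, and taking the supremum over $\nu$ gives the first bound in \eqref{eq:averaged_gap_rate}. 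The second bound uses $\mu^0\in\dom R^*$, so $\norm{\mu^0}_X\le L_R$. The bound tends to zero, which verifies Assumption~\ref{ass:oracle} for the pairs $(\overline y^{\,j},\mu^j)$.

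The main obstacle is the algebraic identity in Step~2. It requires careful use of the recovery formula and the Riesz conventions, so that the linearized dual objective matches $\mathcal L(y^i,\nu)$ exactly rather than up to an error term. Everything after that is routine telescoping.
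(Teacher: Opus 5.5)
Your proposal is correct and is essentially the paper's proof: the paper establishes the same one-step inequality $\frac1{2t}\norm{y^i-q}_W^2+(Ty^i,v)_X-R^*(v)+H(\mu^{i+1})\le(\norm{\mu^i-v}_X^2-\norm{\mu^{i+1}-v}_X^2)/(2\gamma)$, which is your $\mathcal L(y^i,v)+H(\mu^{i+1})$ bound, then sums it, uses convexity in $y$ and the monotonicity of $H$, and takes the supremum over $v\in\dom R^*$ using $R=R^{**}$ and $\dom R^*\subseteq\{\norm{v}_X\le L_R\}$. The only difference is cosmetic: you reach the one-step bound through the linearization identity and the $L$-smooth upper bound for $h$, whereas the paper combines proximal optimality with an exact expansion of the quadratic, which for this quadratic $h$ is the same computation.
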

\begin{proof}
For $v\in\dom R^*$, proximal optimality gives
\[
 \begin{aligned}
 R^*(\mu^{i+1})-R^*(v)-(Ty^i,\mu^{i+1}-v)_X
 &\le\frac1\gamma(\mu^i-\mu^{i+1},\mu^{i+1}-v)_X\\
 &=\frac{\norm{\mu^i-v}_X^2-\norm{\mu^{i+1}-v}_X^2
       -\norm{\mu^{i+1}-\mu^i}_X^2}{2\gamma}.
 \end{aligned}
\]
Using $y^i=q-tT^{{\star}}\mu^i$ to expand the quadratics, we obtain
\begin{equation}\label{eq:averaged_one_step}
 \begin{aligned}
 &\frac1{2t}\norm{y^i-q}_W^2+(Ty^i,v)_X-R^*(v)+H(\mu^{i+1})\\
 &=R^*(\mu^{i+1})-R^*(v)-(Ty^i,\mu^{i+1}-v)_X
      +\frac t2\norm{T^{{\star}}(\mu^{i+1}-\mu^i)}_W^2\\
 &\le\frac{\norm{\mu^i-v}_X^2-\norm{\mu^{i+1}-v}_X^2}{2\gamma},
 \end{aligned}
\end{equation}
where the last step uses $\gamma t\norm{T}^2\le1$.
Sum for $i=0,\ldots,j-1$, divide by $j$, and apply convexity of
the primal quadratic and $H(\mu^j)\le H(\mu^{i+1})$:
\[
 \frac1{2t}\norm{\overline y^{\,j}-q}_W^2
 +(T\overline y^{\,j},v)_X-R^*(v)+H(\mu^j)
 \le\frac{\norm{\mu^0-v}_X^2}{2\gamma j}
 \le\frac{(\norm{\mu^0}_X+L_R)^2}{2\gamma j}.
\]
Take the supremum over $v\in\dom R^*$ and use $R=R^{**}$.
This proves the first upper bound; $\norm{\mu^0}_X\le L_R$ gives
the second. Both objective values are finite, and weak duality gives
nonnegativity.
\end{proof}

The recursion $\overline y^{\,1}=y^0$,
$\overline y^{\,j+1}=(j\overline y^{\,j}+y^j)/(j+1)$ requires no
additional optimization solve. After $j$ dual updates, either candidate
may be tested. For the average, use $s=\overline y^{\,j}-w$ and the
full gap $\Psi_{q,t}(\overline y^{\,j})+H(\mu^j)$.
For example, with $W=X=\R$, $T=I$, $R(x)=|x|$, $q=2$, $t=1$,
$\gamma=1/2$, and $\mu^0=0$, we have $y^0=2$, $\mu^1=1$, and
$\overline y^{\,1}=2$. The pair $(2,1)$ has zero Fenchel defect but
$r=1$ and gap $1/2$. Thus both terms in \eqref{eq:gap_identity}
must be included.

\subsection{Sufficient inner iteration counts}
\begin{corollary}[A sufficient number of inner iterations]
\label{cor:inner_budget}
Under the hypotheses of Theorem~\ref{thm:inner}, let
$\varepsilon_{\mathrm{gap}}>0$. For the recovered pair,
$\mathcal G_{q,t}(y^j,\mu^j)\le\varepsilon_{\mathrm{gap}}$
whenever the integer $j$ satisfies
\begin{equation}\label{eq:absolute_inner_budget}
 j\ge\max\left\{2,
    1+\frac{4L_R\norm{\mu^0-\mu_\star}_X}
                 {\gamma\varepsilon_{\mathrm{gap}}}\right\}.
\end{equation}
For the averaged pair, it suffices that
\begin{equation}\label{eq:averaged_inner_budget}
 j\ge\max\left\{1,
          \frac{2L_R^2}{\gamma\varepsilon_{\mathrm{gap}}}\right\}.
\end{equation}
If $q=w-t\nabla f(w)$ for $w\in\dom\varphi$ and $p(w;t)\ne0$,
either bound with
$\varepsilon_{\mathrm{gap}}=\sigma^2\norm{p(w;t)}_W^2/(8t)$
guarantees the relative test \eqref{eq:certificate} for its corresponding
candidate step.
\end{corollary}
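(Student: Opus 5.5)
The first two claims follow by inverting the rate bounds already proved. For the recovered pair, \eqref{eq:gap_rate} gives, for $j\ge2$, $\mathcal G_{q,t}(y^j,\mu^j)\le 4L_R\norm{\mu^0-\mu_\star}_X/(\gamma(j-1))$. This right-hand side is at most $\varepsilon_{\mathrm{gap}}$ exactly when $j-1\ge 4L_R\norm{\mu^0-\mu_\star}_X/(\gamma\varepsilon_{\mathrm{gap}})$. Together with the requirement $j\ge2$ needed for \eqref{eq:gap_rate}, this is \eqref{eq:absolute_inner_budget}. For the averaged pair, \eqref{eq:averaged_gap_rate} gives $\mathcal G_{q,t}(\overline y^{\,j},\mu^j)\le 2L_R^2/(\gamma j)$ for $j\ge1$. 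Solving $2L_R^2/(\gamma j)\le\varepsilon_{\mathrm{gap}}$ gives \eqref{eq:averaged_inner_budget}.

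For the final claim, set $p=p(w;t)\ne0$ and $y_\star=w+p$. Let $y$ be either candidate and $s=y-w$, with gap $\mathcal G\le\varepsilon_{\mathrm{gap}}=\sigma^2\norm{p}_W^2/(8t)$. The relative test \eqref{eq:certificate} requires $\mathcal G\le\sigma^2\norm{s}_W^2/(2t)$, so it suffices to show $\norm{s}_W\ge\norm{p}_W/2$.

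By \eqref{eq:gap_distance}, which holds for any finite-gap pair and so covers the averaged candidate even though $r\ne0$, we have $\norm{s-p}_W^2=\norm{y-y_\star}_W^2\le 2t\mathcal G\le\sigma^2\norm{p}_W^2/4$. Hence $\norm{s-p}_W\le(\sigma/2)\norm{p}_W<\norm{p}_W/2$, using $\sigma<1$. The reverse triangle inequality then gives $\norm{s}_W\ge(1-\sigma/2)\norm{p}_W\ge\norm{p}_W/2$, and therefore
\[
 \mathcal G\le\frac{\sigma^2\norm{p}_W^2}{8t}
 =\frac{\sigma^2}{2t}\Bigl(\frac{\norm{p}_W}{2}\Bigr)^2
 \le\frac{\sigma^2}{2t}\norm{s}_W^2 .
\]
For the recovered pair one could use the sharper \eqref{eq:recovered_gap_distance}, but the uniform argument covers both candidates.

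The only point that needs care is that the constant $1/8$ must absorb the worst-case lower bound on $\norm{s}_W$. The margin is $1-\sigma/2\ge1/2$, which requires only $\sigma<1$ (indeed $\sigma\le1$), so no further assumption is needed. The hypothesis $p\neq0$ enters only to make $\varepsilon_{\mathrm{gap}}$ positive, so that the iteration bounds are finite.
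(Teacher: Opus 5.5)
Your proposal is correct and follows the paper's proof essentially step for step. You invert \eqref{eq:gap_rate} and \eqref{eq:averaged_gap_rate} for the two absolute budgets, then apply \eqref{eq:gap_distance}, valid for any finite-gap candidate, and the reverse triangle inequality to obtain $\norm{s}_W\ge(1-\sigma/2)\norm{p(w;t)}_W\ge\tfrac12\norm{p(w;t)}_W$, which absorbs the factor $1/8$ exactly as the paper does.
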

\begin{proof}
The absolute statements follow by substitution in
\eqref{eq:gap_rate} and \eqref{eq:averaged_gap_rate}, respectively.
For the relative claim, let $y$ be the candidate in question.
The appropriate absolute bound gives
$\mathcal G_{q,t}(y,\mu^j)\le\varepsilon_{\mathrm{gap}}$.
Since $y_\star=w+p(w;t)$, \eqref{eq:gap_distance} gives
\[
 \norm{y-y_\star}_W
 \le\sqrt{2t\,\mathcal G_{q,t}(y,\mu^j)}
 \le\sqrt{2t\varepsilon_{\mathrm{gap}}}
 =\frac\sigma2\norm{p(w;t)}_W.
\]
The reverse triangle inequality and $0<\sigma<1$ therefore imply
\[
 \norm{y-w}_W
 \ge\norm{p(w;t)}_W-\norm{y-y_\star}_W
 \ge\left(1-\frac\sigma2\right)\norm{p(w;t)}_W
 \ge\frac12\norm{p(w;t)}_W.
\]
Consequently,
$
 \mathcal G_{q,t}(y,\mu^j)
 \le\frac{\sigma^2\norm{p(w;t)}_W^2}{8t}
 \le\frac{\sigma^2}{2t}\norm{y-w}_W^2,
$
which is the relative test.
\end{proof}

The exact step appears only in this iteration estimate; the algorithm
tests the gap directly. Replacing $\norm{\mu^0-\mu_\star}_X$ by
$2L_R$ makes \eqref{eq:absolute_inner_budget} an a priori absolute
bound. The averaged bound already uses only known quantities.
At a stationary point and $t=\tau$, a gap at most
$\tau\varepsilon^2/8$ implies $U_\tau\le\varepsilon$ for either
construction. Indeed, $y_\star=w$ and \eqref{eq:gap_distance} imply
\[
 U_\tau(y,\mu;w)
 \le2\sqrt{\frac{2\mathcal G_{q,\tau}(y,\mu)}{\tau}}
 \le\varepsilon.
\]
The estimates remain valid if $T=0$, with any $\gamma>0$; if
$L_R=0$, the regularizer is constant and all generated gaps are zero.
Both constructions satisfy Assumption~\ref{ass:oracle}, so the outer
convergence and $O(\varepsilon^{-2})$ outer-trial bound apply.
The inner estimates concern a fixed proximal subproblem, not the total
work over all outer requests.

\subsection{An optional test for recovered pairs}
\label{sec:recovered_variant}
Exact recovery permits a less restrictive test; the numerical
experiments use the original tests. For $w\in\dom\varphi$, $t>0$,
and $q=w-t\nabla f(w)$, let $y=w+s=q-tT^{{\star}}\mu$ have finite
gap and consider
\begin{equation}\label{eq:recovered_variant}
 \mathcal G_{q,t}(w+s,\mu)
 \le\frac{\sigma^2}{t}\norm{s}_W^2.
\end{equation}
This permits twice the gap in \eqref{eq:certificate}, yet gives the
same conclusions in Theorem~\ref{thm:certificate}:
\eqref{eq:recovered_gap_distance} yields
$\norm{s-p(w;t)}_W\le\sigma\norm s_W$. The proof of
Theorem~\ref{thm:certificate}, with $r=0$, gives
\[
 \ip{\nabla f(w)}s+\varphi(w+s)-\varphi(w)
 \le-\frac{1-\sigma^2}{t}\norm s_W^2
 \le-\frac{1-\sigma}{t}\norm s_W^2.
\]
The norm comparisons and quadratic-model decrease follow as before.

At the reference parameter $t=\tau$, pair this test with
\begin{equation}\label{eq:recovered_variant_upper}
 \widehat U_\tau(y,\mu;w)
 \defeq\frac{\norm{y-w}_W}{\tau}
   +\sqrt{\frac{\mathcal G_{q,\tau}(y,\mu)}{\tau}}.
\end{equation}
The recovered error bound gives $\chi(w)\le\widehat U_\tau$.
Under \eqref{eq:recovered_variant},
\[
 \widehat U_\tau\le(1+\sigma)\norm s_W/\tau,
 \qquad \chi(w)\ge(1-\sigma)\norm s_W/\tau.
\]
Failure of $\widehat U_\tau\le\varepsilon$ followed by success of
the relative test therefore gives $\chi(w)>c_\sigma\varepsilon$.
Vanishing gaps give finite stopping by the argument of
Proposition~\ref{prop:screening}. Since the same distance, decrease,
and continuation bounds hold, the Cauchy search, outer convergence,
and outer-trial estimate retain their constants. Averaged candidates
continue to use the original tests.

\section{{A semilinear control problem with total variation}}
\label{sec:pde}

Let $\Omega\subset\R^N$ be a bounded Lipschitz domain with
$1\le N\le4$, let $u_d\in L^2(\Omega)$, and fix
$\alpha,\beta>0$. For a control $z$, the state $u=u(z)$ solves
\begin{equation}\label{eq:pde_state}
 -\Delta u+u^3=z\quad\text{in }\Omega,
 \qquad u=0\quad\text{on }\partial\Omega.
\end{equation}
We consider
\begin{equation}\label{eq:pde_objective}
 \min_{z\in H^1(\Omega)} F(z),\qquad
 F(z)\defeq\underbrace{\frac12\norm{u(z)-u_d}_{L^2}^2
                +\frac\alpha2\norm{z}_{L^2}^2}_{f(z)}
                +\beta\int_\Omega|\nabla z|\dd x.
\end{equation}
All norms and integrals in this section are over $\Omega$ unless
otherwise indicated. The control has no prescribed boundary trace.
Semilinear elliptic control with ${\mathrm{BV}}$ costs is studied, for example,
by \citet{CasasKunisch2019}.

The algorithm uses the full $H^1$ metric,
\begin{equation}\label{eq:pde_metric}
 W=H^1(\Omega),\qquad
 \ip{z}{d}\defeq\int_\Omega zd+\nabla z\cdot\nabla d\dd x.
\end{equation}
This metric defines the gradient, proximal subproblems, and 
trust-region radii; the quadratic control cost remains an $L^2$ cost. In the
abstract notation, the outer variable $w$ is the control $z$ and a
primal proximal point $y$ is another control, whereas $u$ denotes a state.

\subsection{Smooth part and the algorithmic assumptions}

\begin{proposition}[State map and global smoothness]
\label{prop:pde_smoothness}
For every $z\in L^2(\Omega)$, \eqref{eq:pde_state} has a unique
weak solution in $H_0^1(\Omega)$. The state map
$u:L^2(\Omega)\to H_0^1(\Omega)$ is {twice continuously
differentiable}. The reduced functional $f$ is {twice
continuously differentiable} on $L^2(\Omega)$, with a
globally bounded Hessian. Consequently, its Hilbert gradient in
\eqref{eq:pde_metric} is globally Lipschitz on $W$.
\end{proposition}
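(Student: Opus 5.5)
We will argue with monotone-operator theory for existence, the implicit function theorem for smoothness, and an adjoint representation for the Hessian bound.

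\emph{Well-posedness.} For $z\in L^2(\Omega)\subset H^{-1}(\Omega)$, the operator $A(u)=-\Delta u+u^3$ is continuous, strictly monotone and coercive from $H_0^1(\Omega)$ to $H^{-1}(\Omega)$. The key embedding is $H_0^1\hookrightarrow L^4$, which holds because $N\le4$; it makes $u\mapsto u^3$ a bounded, continuous (indeed $C^\infty$) map $H_0^1\to L^{4/3}\hookrightarrow H^{-1}$. Minty--Browder, or direct minimization of $\frac12\norm{\nabla u}^2+\frac14\int u^4-\int zu$, gives existence. Testing the difference of two solutions against itself gives uniqueness and the Lipschitz bound $\norm{u(z_1)-u(z_2)}_{H_0^1}\le C\norm{z_1-z_2}_{L^2}$. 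Testing with $u$ gives the a priori bound $\norm{u}_{H_0^1}\le C\norm{z}_{L^2}$.

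\emph{$C^2$ regularity.} Set $e(u,z)=-\Delta u+u^3-z$ as a map $H_0^1\times L^2\to H^{-1}$. It is $C^\infty$, since $u\mapsto u^3$ is a continuous trilinear form on $L^4$. Its partial derivative $e_u(u,z)=-\Delta+3u^2$ is an isomorphism $H_0^1\to H^{-1}$ by Lax--Milgram, because $3u^2\ge0$ and $3u^2v\in H^{-1}$ for $v\in H_0^1$ by H\"older in $L^4$. The implicit function theorem therefore makes $u$ of class $C^2$ (indeed $C^\infty$), and hence $f$ is $C^2$ on $L^2$. The derivatives follow from the linearized problems:
\[
 -\Delta u'+3u^2u'=d,\qquad -\Delta u''+3u^2u''=-6u\,u'_1u'_2 .
\]

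\emph{Globally bounded Hessian.} This is the main obstacle, since $u''$ carries the factor $u$, which grows with $z$. For the linearized equation, testing with $u'$ and using $3u^2\ge0$ gives $\norm{u'}_{H_0^1}\le C\norm{d}_{L^2}$ uniformly in $z$. To control the second derivative we use the adjoint representation
\[
 f''(z)[d,d]=\norm{u'}^2-6\int u\,\lambda\,(u')^2+\alpha\norm{d}^2,
\]
where $-\Delta\lambda+3u^2\lambda=u-u_d$. The factor $u\lambda$ is dangerous when $u$ is large, and we would first try to absorb it through the weight $3u^2$. Since $u^2\lambda$ is controlled by the adjoint equation, we write $|u\lambda|\le$ (small where $|u|\le1$) $+\,|u^2\lambda|$. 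For the size of $\lambda$: testing the adjoint equation with $\lambda$ gives $\int u^2\lambda^2\le \norm{u-u_d}\norm{\lambda}$, and $\norm{u}$ is bounded only by $\norm{z}$. Reducing to $\sup$-norm control would require the maximum principle: $|\lambda|$ is bounded by the solution with data $|u|+|u_d|$, and $L^\infty$ bounds on $u$ need $z\in L^q$ with $q>N/2$.

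This uniform bound is the step most likely to need extra care. The honest fallback is to bound $\int|u\lambda|(u')^2$ by $\norm{u\lambda}_{L^2}\norm{u'}_{L^4}^2$ and to seek a bound on $\norm{u\lambda}$ uniform in $z$ from the adjoint equation. Testing with $\lambda\,u^2$-type multipliers, or with $\lambda$ itself together with the monotone damping, is where the core estimate must be found.

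\emph{Transfer to the $H^1$ metric.} Once the Hessian bound $|f''(z)[d_1,d_2]|\le M\norm{d_1}_{L^2}\norm{d_2}_{L^2}$ is established, it suffices because $\norm{\cdot}_{L^2}\le\norm{\cdot}_W$. This gives $\norm{\nabla f(z_1)-\nabla f(z_2)}_W=\norm{f'(z_1)-f'(z_2)}_{W^*}\le M\norm{z_1-z_2}_W$ by the mean value theorem.
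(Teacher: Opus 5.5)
Your well-posedness and $C^2$ arguments are sound. The implicit function theorem applied to $e(u,z)$, combined with uniqueness and your Lipschitz dependence of $u$ on $z$, does the job of the paper's inverse function theorem applied to the bijection $A$. Your transfer from the $L^2$ Hessian bound to the $W$-gradient matches the paper. The gap is where you flagged it: \emph{you never establish a bound on $\norm{u\lambda}_{L^2}$ independent of $z$}. So the global Hessian bound, which is the substance of the proposition, is not proved.

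Your estimate $\int u^2\lambda^2\le\norm{u-u_d}_{L^2}\norm{\lambda}_{L^2}$ fails because it treats the adjoint source as a whole. That brings back $\norm{u}_{L^2}$, which is only bounded by $\norm{z}_{L^2}$. The maximum-principle route is a dead end: $L^\infty$ bounds on $u$ grow with $z$, and for $N=4$ the datum $z\in L^2$ does not even give $u\in L^\infty$.

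The missing observation is that the dangerous part of the source enters only through $\int u\lambda$, which is exactly what the monotone damping $3u^2\lambda$ controls. Test the adjoint equation with $\lambda$, split the right-hand side, and use $\norm{u\lambda}_{L^1}\le|\Omega|^{1/2}\norm{u\lambda}_{L^2}$ together with Poincar\'e's inequality:
\[
 \norm{\nabla\lambda}_{L^2}^2+3\norm{u\lambda}_{L^2}^2
 =\int_\Omega u\lambda\dd x-\int_\Omega u_d\lambda\dd x
 \le|\Omega|^{1/2}\norm{u\lambda}_{L^2}+C_P\norm{u_d}_{L^2}\norm{\nabla\lambda}_{L^2}.
\]
Young's inequality absorbs both right-hand terms into the left. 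This gives
\[
 \norm{\nabla\lambda}_{L^2}^2+3\norm{u\lambda}_{L^2}^2\le C_P^2\norm{u_d}_{L^2}^2+\frac{|\Omega|}{3},
\]
uniformly in $z$.

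Your own H\"older step then finishes the argument. Use $\int|u\lambda||v_d||v_e|\le\norm{u\lambda}_{L^2}\norm{v_d}_{L^4}\norm{v_e}_{L^4}$, with $\norm{v_d}_{L^4}\le C_4C_P\norm{d}_{L^2}$ from the linearized energy estimate. This yields
\[
 |f''(z)[d,e]|\le\bigl(\alpha+C_P^4+6C_4^2C_P^2\norm{u\lambda}_{L^2}\bigr)\norm{d}_{L^2}\norm{e}_{L^2},
\]
which is the control-independent bound the paper uses. No pointwise splitting of $|u\lambda|$ and no $\lambda u^2$-type multiplier is needed.
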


The proof and an explicit control-independent bound are given in
Appendix~\ref{app:pde_smoothness}. For implementation, let the adjoint
$\lambda$ and the linearized state $v_d$, for $d\in L^2(\Omega)$,
solve
\begin{equation}\label{eq:pde_adjoint_linearized}
 \begin{aligned}
 -\Delta\lambda+3u^2\lambda&=u-u_d,\\
 -\Delta v_d+3u^2v_d&=d,
 \end{aligned}
 \qquad \lambda,v_d\in H_0^1(\Omega).
\end{equation}
Then
\begin{equation}\label{eq:pde_derivatives}
 \begin{aligned}
 f'(z)[d]&=(\lambda+\alpha z,d)_{L^2},\\
 f''(z)[d,e]&=\alpha(d,e)_{L^2}
           +\int_\Omega(1-6u\lambda)v_dv_e\dd x.
 \end{aligned}
\end{equation}
The coefficient $1-6u\lambda$ can have either sign. The gradient
used by the algorithm is determined by the Riesz equation
\begin{equation}\label{eq:pde_hilbert_gradient}
 \ip{\nabla f(z)}{d}=(\lambda+\alpha z,d)_{L^2}
 \qquad(d\in H^1(\Omega)).
\end{equation}

For the nonsmooth term, take the Hilbert space
\begin{equation}\label{eq:pde_composition}
 X=L^2(\Omega;\R^N),\qquad Tz=\nabla z,\qquad
 R(v)\defeq\beta\int_\Omega|v|\dd x.
\end{equation}
Then $\norm{T}\le1$ and $R$ is finite, convex, and globally Lipschitz,
since
\[
 |R(v)-R(\widetilde v)|
 \le\beta|\Omega|^{1/2}\norm{v-\widetilde v}_X.
\]
Under the Hilbert identification of $X$ with its dual,
\begin{equation}\label{eq:pde_conjugate}
 R^*(\mu)=\begin{cases}
 0,&|\mu(x)|\le\beta\quad\text{for almost every }x,\\
 +\infty,&\text{otherwise}.
 \end{cases}
\end{equation}
Thus, for every
$\gamma>0$,
\begin{equation}\label{eq:pde_dual_projection}
 \prox_{\gamma R^*}(\mu)(x)
 =\frac{\mu(x)}{\max\{1,|\mu(x)|/\beta\}}.
\end{equation}
The feasible initial multiplier $\mu^0=0$ and the conjugate values
are explicit. The Hilbert adjoint of $T$ satisfies
\begin{equation}\label{eq:pde_adjoint_operator}
 \ip{T^{{\star}}\mu}{d}
 =\int_\Omega\mu\cdot\nabla d\dd x
 \qquad(d\in H^1(\Omega)).
\end{equation}
This variational definition includes the boundary contribution and
does not require a normal trace of an arbitrary $\mu\in X$.

Proposition~\ref{prop:pde_smoothness}, $F\ge0$, and
\eqref{eq:pde_composition} verify Assumption~\ref{ass:basic}.
Equations~\eqref{eq:pde_conjugate}--\eqref{eq:pde_dual_projection}
provide the evaluations in Assumption~\ref{ass:lipschitz_R}; 
the dual proximal-gradient iteration may use $\gamma=1/t$ because 
$\norm{T}\le1$. The constructive gap results of Section~\ref{sec:inner} 
therefore apply, under the stated assumptions on exact evaluations.

\begin{remark}[Banach-space formulation]
The same problem also admits the Banach-space factorization
$X_1=L^1(\Omega;\R^N)$, $T_1z=\nabla z$, and
$R_1(v)\defeq\beta\norm{v}_{L^1}$, with $W$ unchanged.
Here $X_1^*=L^\infty$ and $R_1^*$ is the indicator of
$K\defeq\{\mu\in L^\infty:|\mu|\le\beta\text{ a.e.}\}$.
Since $|\Omega|<\infty$, this is also the feasible multiplier set
in the $L^2$ formulation above. The primal objectives, dual objectives
on $K$, and gap certificates coincide. Thus the $L^2$ projected-dual
iteration supplies the inner procedure in Assumption~\ref{ass:oracle}
for this Banach-space formulation.
\end{remark}

\subsection{\texorpdfstring{$H^1$}{H1} control}

Write $|Dz|(\Omega)$ for the Euclidean total variation of a
${\mathrm{BV}}$ function. For $z\in H^1(\Omega)$ it equals
$\int_\Omega|\nabla z|\dd x$.

\begin{proposition}[$H^1$ control]
\label{prop:pde_existence}
The extension of \eqref{eq:pde_objective} to
${\mathrm{BV}}(\Omega)\cap L^2(\Omega)$, with regularizer
$\beta|Dz|(\Omega)$, has a global minimizer. If $\Omega$ is convex,
every such minimizer
$\bar z$ belongs to $H^1(\Omega)$ and its adjoint satisfies
\begin{equation}\label{eq:pde_optimal_control_regularity}
 \alpha\norm{\nabla\bar z}_{L^2}
 \le\norm{\nabla\bar\lambda}_{L^2}.
\end{equation}
In this case, \eqref{eq:pde_objective} attains its minimum in $W$.
\end{proposition}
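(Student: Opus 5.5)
Existence will follow from the direct method in $\mathrm{BV}(\Omega)\cap L^2(\Omega)$. Regularity on convex domains will follow from a first-order optimality condition compared against a penalized $H^1$ problem.

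\emph{Existence.} The extended objective is nonnegative, so a minimizing sequence $(z_n)$ has bounded $\norm{z_n}_{L^2}$ (from $\alpha>0$) and bounded $|Dz_n|(\Omega)$. On the bounded Lipschitz domain, BV compactness gives a subsequence with $z_n\to\bar z$ in $L^1$ and $z_n\rightharpoonup\bar z$ weakly in $L^2$. Total variation is lower semicontinuous under $L^1$ convergence, and the $L^2$ norm is weakly lower semicontinuous. For the tracking term, I will use that the control-to-state map is weakly-to-strongly continuous from $L^2$ into $L^2$. Indeed, the states are bounded in $H_0^1$ by monotonicity of $u\mapsto u^3$. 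Rellich compactness and the Sobolev embedding for $N\le4$ (which controls $u^3$) let one pass to the limit in the weak formulation, and uniqueness identifies the limit state. Hence $\bar z$ is a global minimizer.

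\emph{Regularity on convex $\Omega$.} Let $\bar z$ be any minimizer, with state $\bar u$ and adjoint $\bar\lambda\in H_0^1$ from \eqref{eq:pde_adjoint_linearized}. Since $f$ is $C^1$ on $L^2$ by Proposition~\ref{prop:pde_smoothness}, minimality gives, for every direction $d$ in $\mathrm{BV}\cap L^2$, the inequality $(\bar\lambda+\alpha\bar z,d)_{L^2}+\beta\bigl(|D(\bar z+d)|-|D\bar z|\bigr)\ge0$ up to first order. By convexity of TV this means that $\bar z$ minimizes the convex functional
\[
 J(z)\defeq\frac\alpha2\norm{z+\alpha^{-1}\bar\lambda}_{L^2}^2+\beta|Dz|(\Omega).
\]
This is an $L^2$-TV denoising (ROF) problem with datum $g=-\alpha^{-1}\bar\lambda\in H^1(\Omega)$. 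Since $J$ is strictly convex, $\bar z$ is its unique minimizer. The key step is therefore the known $H^1$ regularity of ROF minimizers on convex domains: if $g\in H^1(\Omega)$ and $\Omega$ is convex, then the minimizer lies in $H^1$ and $\norm{\nabla z}_{L^2}\le\norm{\nabla g}_{L^2}$. With $g=-\alpha^{-1}\bar\lambda$, this yields \eqref{eq:pde_optimal_control_regularity}.

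I expect this ROF estimate to be the main obstacle if it must be proved rather than cited. The approach I would try first is to regularize with $|\cdot|_\delta=\sqrt{|\cdot|^2+\delta^2}$ and solve the smooth Euler--Lagrange equation $\alpha(z-g)-\beta\,\mathrm{div}(\nabla|\cdot|_\delta(\nabla z))=0$ with natural Neumann condition. I would then test with $-\Delta z$, or equivalently differentiate the equation. On a convex domain the boundary term coming from the curvature has a favorable sign, as in Grisvard's inequality. Together with monotonicity of $\nabla|\cdot|_\delta$, this gives $\norm{\nabla z_\delta}^2\le(\nabla g,\nabla z_\delta)$, and hence $\norm{\nabla z_\delta}\le\norm{\nabla g}$. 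This requires first establishing $H^2$ regularity of $z_\delta$, for example by approximating $\Omega$ by smooth convex domains. Letting $\delta\to0$, weak $H^1$ compactness and $\Gamma$-convergence to $J$ identify the limit as $\bar z$, and the norm bound passes to the limit by lower semicontinuity.

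Finally, the infimum of \eqref{eq:pde_objective} over $W=H^1$ is at least the BV infimum, because $H^1\subset\mathrm{BV}\cap L^2$ and the two functionals agree there. Since $\bar z\in H^1$ attains the BV infimum, the minimum over $W$ is attained.
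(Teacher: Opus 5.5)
Your proposal is correct and follows the paper's proof. Both use the direct method in $\mathrm{BV}\cap L^2$, then the variational inequality obtained along $\bar z+\theta(v-\bar z)$, which identifies $\bar z$ as the unique minimizer of the frozen ROF problem with datum $-\bar\lambda/\alpha$, and then the convex-domain $H^1$ estimate. The paper simply cites that estimate from Bouchut--Carstensen--Ern, so your regularization sketch is not needed. The only minor difference is the tracking term: the paper uses the compact embedding $L^2\hookrightarrow V^*$ and the strong-monotonicity bound $\norm{u(z_n)-u(\bar z)}_V\le\norm{z_n-\bar z}_{V^*}$, in place of your Rellich and limit-passage argument, and both routes work.
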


Appendix~\ref{app:pde_existence} proves this result by the direct
method in ${\mathrm{BV}}\cap L^2$, followed by the convex-domain regularity
theorem of \citet{BouchutCarstensenErn2025}.

\section{Numerical experiments}
\label{sec:pde_numerics}

We apply Algorithm~\ref{alg:outer} to \eqref{eq:pde_objective} with
$\Omega=(0,1)^2$ and
\begin{equation}\label{eq:num_data}
 u_d(x)=\sin(2\pi x_1)\sin(2\pi x_2),\qquad
 \alpha=10^{-6},\qquad\beta=10^{-4}.
\end{equation}
We examine iteration counts and computational cost under mesh refinement.
{We divide} the square into $n\times n$ squares and split each along its
southwest--northeast diagonal. Continuous piecewise affine functions
approximate the control in $W_h\subset H^1(\Omega)$ and the state and
adjoint in $V_h=W_h\cap H_0^1(\Omega)$. There are $(n+1)^2$ control
unknowns. The metric is the full $H^1$ inner product
\eqref{eq:pde_metric}. On each triangle, we use a positive seven-point
quadrature rule $\mathcal Q_h$, exact through total degree five. This integrates the
quartic polynomial terms in the state equation and its derivatives 
exactly. The same rule defines the discrete tracking functional 
and its derivatives consistently. 
With $a_K\defeq|K|$,
the discrete objective is
\begin{equation}\label{eq:num_discrete_objective}
 F_h(z_h)\defeq\tfrac12\mathcal Q_h\bigl((u_h(z_h)-u_d)^2\bigr)
       +\tfrac\alpha2\norm{z_h}_{L^2}^2
       +\beta\sum_K a_K|\nabla z_h|_K.
\end{equation}
The last term integrates the {isotropic total variation} exactly, without
smoothing. State, adjoint, and sensitivity equations are the Galerkin
forms of \eqref{eq:pde_state} and \eqref{eq:pde_adjoint_linearized},
using the same quadrature.

\subsection{Implementation}
\label{sec:num_implementation}

Every run starts independently from $z_h^0=0$, $\mu^0=0$, with
\begin{equation}\label{eq:num_parameters}
 \begin{gathered}
 \varepsilon=10^{-6},\quad\tau=1,\quad\sigma=\tfrac12,
 \quad\Delta_0=\Delta_{\max}=10^3,\quad\nu=1,\\
 \zeta=\gamma_c=\tfrac12,\quad\gamma_e=2,
 \quad\eta_1=0.1,\quad\eta_2=0.75.
 \end{gathered}
\end{equation}
The implementation follows four steps in Algorithm~\ref{alg:outer}.
\begin{enumerate}
\item \emph{{Derivatives and stationarity.}}
{We solve} the state equation by damped Newton, starting trial states from
the current state. {We factor} the Jacobian at the computed state and reuse
it for the adjoint and subsequent linearized solves. The linear systems
are solved by sparse SuperLU \citep{Li2005SuperLU}.
{We measure} the state residual in $V_h^*$, where $V_h$ carries the norm
$\norm{\nabla\cdot}_{L^2}$, and use the tolerance
$10^{-12}+2\cdot10^{-13}\max\{1,\norm{z_h}_{V_h^*}\}$.
{We apply} Algorithm~\ref{alg:reference} to the proximal problem
\eqref{eq:exact_prox} at $t=\tau$, first testing $y=z_h^k$ with
the available feasible multiplier. {We generate} further pairs by accelerated
dual proximal gradient \citep{BeckTeboulle2009} with $\gamma=1/t$;
if needed, {we switch} after 1000 updates to the ordinary iteration
\eqref{eq:dual_iteration}.
At each tested pair, {we terminate} the outer algorithm if
$U_\tau\le\varepsilon$; otherwise {we end} the proximal call when
\eqref{eq:certificate} holds and continue the outer iteration.
Tests use projected multipliers satisfying $|\mu_K|\le\beta$ on
every triangle and both terms of \eqref{eq:gap_identity}.
Dual updates and certificate evaluations use solves with the fixed
$H^1$ Riesz matrix, factored once per mesh, and elementwise ball
projections (ensuring dual feasibility); no further state or adjoint solves are needed.
All reported proximal calls terminate before the switch.
\item \emph{{Cauchy step.}}
{We use} the projected curvature defined below and apply
Algorithm~\ref{alg:cauchy}, starting at
$t=\min\{\tau,(1-\sigma)/(1+b_k)\}$ and halving $t$ until the
relatively certified step lies within $\Delta_k$. {We denote} the returned
step by $s_k^{\rm C}$.
\item \emph{Optional model decrease.}
{We approximately minimize} $m_k(d)$ without its trust-region constraint.
{We replace} TV by $\beta\sum_K a_K\xi_K$ with
$\xi_K\ge|\nabla(z_h^k+d)|_K$, giving an equivalent quadratic conic
problem \citep{AlizadehGoldfarb2003}. Our code assembles it for the
interior-point (IP) solver Clarabel~0.11.1 \citep{GoulartChen2026Clarabel}.
{We scale} a finite conic candidate $d$ to
$\widehat d=\min\{1,\Delta_k/\norm{d}_W\}d$ (scale one at $d=0$).
{We evaluate} $B_k\widehat d$ and the elementwise TV to recompute
$\pred_k(\widehat d)$ from \eqref{eq:model}.
{We use} $\widehat d$ if $\pred_k(\widehat d)\ge\pred_k(s_k^{\rm C})$;
otherwise {we retain} $s_k^{\rm C}$, also used if no finite candidate is returned.
Thus \eqref{eq:improved_step} holds with $\nu=1$.
When the conic candidate is selected, {we undo} the objective and element
scalings of its TV multiplier, project onto $|\mu_K|\le\beta$, and
reuse it in subsequent proximal calls.
\item \emph{{Trial evaluation and acceptance.}}
{We solve} the nonlinear state equation at $z_h^k+d_k$ and evaluate
$F_h(z_h^k+d_k)$. {We use}
$\rho_k=[F_h(z_h^k)-F_h(z_h^k+d_k)]/\pred_k(d_k)$ in the acceptance
and radius updates of Algorithm~\ref{alg:outer}.
{We reuse} the trial state on acceptance; on rejection, {we retain} the current
state, adjoint, and model.
\end{enumerate}

For the curvature in \eqref{eq:model}, {we set}
$\omega_k\defeq(1-6u_k\lambda_k)_+$ at quadrature points, using the current
state and adjoint. With $v_d$ the discrete linearized state in direction
$d$, the full model is
\begin{equation}\label{eq:num_full_model}
 (B_k^{\rm full}d,e)_W
   \defeq\alpha(d,e)_{L^2}+\mathcal Q_h(\omega_kv_dv_e).
\end{equation}
Replacing negative curvature weights by zero makes the optional model
convex; the objective and its gradient remain unchanged.
For $p=5$, {we form} the nodal interpolants of
$\sin(i\pi x_1)\sin(j\pi x_2)$, $1\le i,j\le p$, and diagonalize their
Gram matrix in $\mathcal Q_h(\omega_k\,\cdot\,\cdot)$.
{We retain} eigenvector combinations with eigenvalues
exceeding $10^{-12}$ times the largest eigenvalue, and normalize them to
obtain $\theta_1,\ldots,\theta_r$ with
$\mathcal Q_h(\omega_k\theta_i\theta_j)=\delta_{ij}$. {We define}
\begin{equation}\label{eq:num_projected_model}
 \begin{aligned}
 \Pi_kv&\defeq\sum_{j=1}^r\mathcal Q_h(\omega_kv\theta_j)\theta_j,\\
 (B_k^{\rm proj}d,e)_W
   &\defeq\alpha(d,e)_{L^2}
          +\mathcal Q_h\bigl(\omega_k(\Pi_kv_d)(\Pi_kv_e)\bigr).
 \end{aligned}
\end{equation}
Here $\Pi_k$ is the unique weighted least-squares projection onto
$S_k\defeq\operatorname{span}\{\theta_1,\ldots,\theta_r\}$, with $\Pi_k=0$
if $r=0$. Weighted orthogonality gives
\[
 \bigl((B_k^{\rm full}-B_k^{\rm proj})d,d\bigr)_W
 =\mathcal Q_h\bigl(\omega_k(v_d-\Pi_kv_d)^2\bigr)\ge0,
\]
so $0\le B_k^{\rm proj}\le B_k^{\rm full}$ in quadratic-form order.
The finite element spaces are unchanged. 

For exact discrete state and adjoint solutions, the energy estimates
in Appendix~\ref{app:pde_smoothness} hold with $\norm{u_d}_{L^2}^2$
replaced by $\mathcal Q_h(u_d^2)$: polynomial products are integrated
exactly, and the target load is bounded by quadrature Cauchy--Schwarz.
These estimates, H\"older's inequality for $\mathcal Q_h$, and
$\omega_k\le1+6|u_k\lambda_k|$ give
\begin{equation}\label{eq:num_model_bound}
 \norm{B_k}\le b_k\defeq \min\{\alpha+C_P^4\max\omega_k,\overline b_h\},
 \qquad C_P=(\sqrt2\pi)^{-1},\quad C_4^2=2C_P.
\end{equation}
Ladyzhenskaya's inequality gives $C_4^2=2C_P$.
The maximum is over quadrature points, and $\overline b_h$ is
\eqref{eq:pde_global_lipschitz_constant} with
$\norm{u_d}_{L^2}^2$ replaced by $\mathcal Q_h(u_d^2)$.
Algorithm~\ref{alg:cauchy} needs only an upper bound on $\norm{B_k}$;
sharp constants are unnecessary.
The invertible state Jacobian gives a smooth discrete state map.
Applying the same estimates to the signed Hessian weight
$1-6u_k\lambda_k$ bounds $f_h''$ by $\overline b_h$, so the discrete
smooth gradient is globally Lipschitz. Since $\mathcal Q_h(u_d^2)\le1$,
these bounds are uniform in the mesh.

The optional conic solve is limited to 100 IP iterations.
Early stopping is permitted after at least three iterations when
the reported gap, in objective units, is at most $0.1$ times a
positive reported model reduction and both residuals are at most
$\min\{10^{-5},10^{-3}U_\tau^2\}$.

\subsection{Mesh refinement}
\label{sec:num_results}

Table~\ref{tab:num_mesh} reports mesh refinement with 25 curvature
modes. Counts are totals over optimization: outer trials,
projected-dual updates, conic IP iterations, and nonlinear state
Newton corrections. Both outer trials are accepted on every mesh.
The reported $U_\tau$ uses the computed state and adjoint and
measures stationarity of the discrete problem.
Times cover optimization only, excluding mesh assembly and the
fixed initial Riesz and residual factorizations. We use Python~3.12,
SciPy~1.18, and Clarabel~0.11.1 on macOS arm64, with one requested
numerical thread.

\begin{table}[!htbp]
 \centering\small\setlength{\tabcolsep}{4pt}
 \input{tables/table_mesh_compact.tex}
 \caption{Mesh refinement with 25 curvature modes.
 Dual counts include accelerated updates; Newton counts are state
 corrections. Each run calls the proximal routine five times and the optional
conic solver twice; a proximal call may require no dual updates.}
 \label{tab:num_mesh}
\end{table}

\begin{figure}[h!]
 \centering
 \includegraphics[width=\textwidth]{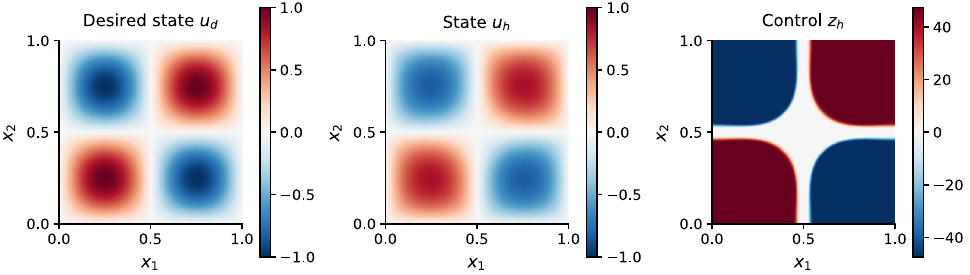}
 \caption{Desired state, computed state, and control for $n=512$ with
 25 curvature modes. The states share a color scale; the control uses
 its own scale. }
 \label{fig:num_fields}
\end{figure}

Outer and state Newton counts remain constant over the tested meshes,
while IP totals vary from 28 to 52. This is numerical evidence of
mesh-independent outer iteration counts at this fixed tolerance.
Runtime increases with the size of the linear systems.
Figure~\ref{fig:num_fields} shows the computed fields on the
finest mesh, with 263,169 control unknowns.

\FloatBarrier
\paragraph{{Acknowledgement.}}
{The author thanks Johannes Milz for carefully reading the manuscript
and for helpful comments on its presentation.}

\appendix

\section{PDE analysis for the semilinear control problem}
\label{app:pde}

This appendix proves the results stated in Section~\ref{sec:pde}.
Set $V\defeq H_0^1(\Omega)$ with norm
$\norm{v}_V\defeq\norm{\nabla v}_{L^2}$ and let $V^*=H^{-1}(\Omega)$
carry the corresponding dual norm. Fix constants $C_P,C_4$ such that
\begin{equation}\label{eq:pde_embedding_constants}
 \norm{v}_{L^2}\le C_P\norm{\nabla v}_{L^2},\qquad
 \norm{v}_{L^4}\le C_4\norm{\nabla v}_{L^2}
 \qquad(v\in V).
\end{equation}
The assumption $N\le4$ ensures the continuous embedding
$V\hookrightarrow L^4(\Omega)$.

\subsection{State map, derivatives, and a global Hessian bound}
\label{app:pde_smoothness}

\begin{proof}[Proof of Proposition~\ref{prop:pde_smoothness}]
The embedding $V\hookrightarrow L^4(\Omega)$
\citep[Theorem~7.1, p.~355]{Troeltzsch2010}
makes the cubic a continuous polynomial from $V$ to $V^*$.
Thus $A(u)\defeq-\Delta u+u^3$ is smooth, and
\[
 \pair{A(u)-A(v)}{u-v}\ge\norm{u-v}_V^2.
\]
Since $A$ is smooth, it is hemicontinuous; since $A(0)=0$,
strong monotonicity also implies coercivity. The
monotone-operator theorem
\citep[Theorem~4.1, p.~185]{Troeltzsch2010}
therefore shows that $A:V\to V^*$ is bijective. Strong monotonicity gives
$\norm{u(z_1)-u(z_2)}_V
 \le C_P\norm{z_1-z_2}_{L^2}$ for $z_1,z_2\in L^2(\Omega)$.

For every $u\in V$, the derivative
$J_u=A'(u)=-\Delta+3u^2$ has a bounded coercive bilinear form,
so it is an isomorphism by Lax--Milgram.
The inverse function theorem gives a smooth local inverse at
every point. Since $A$ is bijective, these local inverses agree
with $A^{-1}$, which is therefore smooth on $V^*$.
Restricting to $L^2(\Omega)\hookrightarrow V^*$ gives the
asserted differentiability of the state map.
For $v_d=u'(z)d$ and $w_{d,e}=u''(z)[d,e]$, differentiation yields
\[
 J_uv_d=d,\qquad J_uw_{d,e}=-6uv_dv_e
 \quad\text{in }V^*.
\]
Testing these equations with
$\lambda=J_u^{-1}(u-u_d)$ and using symmetry and the chain rule
gives $f\in C^2(L^2(\Omega);\R)$ and
\eqref{eq:pde_derivatives}. All products are justified by
$V\hookrightarrow L^4(\Omega)$.

For the global bound, use the constants $C_P,C_4$ from
\eqref{eq:pde_embedding_constants}.
Testing the linearized equation with $v_d$ gives
\begin{equation}\label{eq:pde_linearized_bound}
 \norm{\nabla v_d}_{L^2}\le C_P\norm{d}_{L^2}.
\end{equation}
Testing the adjoint equation with $\lambda$ yields
\[
 \norm{\nabla\lambda}_{L^2}^2+3\norm{u\lambda}_{L^2}^2
 \le |\Omega|^{1/2}\norm{u\lambda}_{L^2}
      +C_P\norm{u_d}_{L^2}\norm{\nabla\lambda}_{L^2}.
\]
Young's inequality therefore gives the control-independent bound
\begin{equation}\label{eq:pde_uniform_adjoint}
 \norm{\nabla\lambda}_{L^2}^2+3\norm{u\lambda}_{L^2}^2
 \le C_P^2\norm{u_d}_{L^2}^2+\frac{|\Omega|}{3}.
\end{equation}
Since $\norm{v_d}_{L^2}\le C_P^2\norm{d}_{L^2}$ and
$\norm{v_d}_{L^4}\le C_4C_P\norm{d}_{L^2}$, H\"older's inequality gives
\[
 |f''(z)[d,e]|
 \le\bigl(\alpha+C_P^4+6C_4^2C_P^2\norm{u\lambda}_{L^2}\bigr)
       \norm{d}_{L^2}\norm{e}_{L^2}.
\]
Thus $|f''(z)[d,e]|\le L_f\norm{d}_{L^2}\norm{e}_{L^2}$ with
\begin{equation}\label{eq:pde_global_lipschitz_constant}
 L_f\defeq\alpha+C_P^4+2\sqrt3\,C_4^2C_P^2
       \left(C_P^2\norm{u_d}_{L^2}^2
                         +\frac{|\Omega|}{3}\right)^{1/2}.
\end{equation}
Since $\norm{d}_{L^2}\le\norm{d}_W$, the same constant bounds
the Hessian on $W$. Integration along line segments and the
Riesz identity~\eqref{eq:pde_hilbert_gradient} give
\[
 \norm{\nabla f(z_1)-\nabla f(z_2)}_W
 \le L_f\norm{z_1-z_2}_W
 \qquad(z_1,z_2\in W).
\]
\end{proof}

\subsection{Existence and regularity of optimal controls}
\label{app:pde_existence}

\begin{proof}[Proof of Proposition~\ref{prop:pde_existence}]
Extend the objective to $L^2$ by setting it equal to $+\infty$
outside ${\mathrm{BV}}\cap L^2$. Its infimum is finite, since the objective is
nonnegative and finite at zero. A minimizing sequence $(z_n)$ is
bounded in $L^2$ by the quadratic cost and in ${\mathrm{BV}}$ by the total
variation term and $\norm{z_n}_{L^1}\le|\Omega|^{1/2}\norm{z_n}_{L^2}$.
Compactness in ${\mathrm{BV}}$ and weak compactness in $L^2$ give, after extraction,
\[
 z_n\to\bar z\quad\text{in }L^1(\Omega),\qquad
 z_n\rightharpoonup\bar z\quad\text{in }L^2(\Omega),
\]
where testing against bounded functions identifies the two limits.
Lower semicontinuity of total variation gives $\bar z\in {\mathrm{BV}}\cap L^2$.

The inclusion $L^2\hookrightarrow V^*$ is compact, being the adjoint
of the compact inclusion $V\hookrightarrow L^2$. Hence
$z_n\to\bar z$ in $V^*$, and strong monotonicity of the state operator gives
\[
 \norm{u(z_n)-u(\bar z)}_V\le\norm{z_n-\bar z}_{V^*}\to0.
\]
Thus the tracking term converges. Lower semicontinuity of the
quadratic control cost and total variation proves minimality of $\bar z$.

Fix any minimizer $\bar z$ and $v\in {\mathrm{BV}}\cap L^2$. Minimality along
the segment $\bar z+\theta(v-\bar z)$ and convexity of total variation give
\[
 0\le f(\bar z+\theta(v-\bar z))-f(\bar z)
       +\theta\beta\bigl(|Dv|(\Omega)-|D\bar z|(\Omega)\bigr)
 \qquad(0<\theta<1).
\]
Dividing by $\theta$ and letting $\theta\downarrow0$ yields
\begin{equation}\label{eq:pde_relaxed_optimality}
 (\bar\lambda+\alpha\bar z,v-\bar z)_{L^2}
       +\beta\bigl(|Dv|(\Omega)-|D\bar z|(\Omega)\bigr)\ge0.
\end{equation}
It follows that $\bar z$ uniquely minimizes
\begin{equation}\label{eq:pde_frozen_denoising}
 v\longmapsto\frac\alpha2
        \norm{v+\bar\lambda/\alpha}_{L^2}^2
       +\beta|Dv|(\Omega),\qquad v\in {\mathrm{BV}}(\Omega)\cap L^2(\Omega).
\end{equation}
Indeed, the difference of its values at $v$ and $\bar z$ is the
left-hand side of \eqref{eq:pde_relaxed_optimality} plus
$\alpha\norm{v-\bar z}_{L^2}^2/2$. This uniqueness concerns the
convex problem with $\bar\lambda$ fixed.

Suppose now that $\Omega$ is convex. In its homogeneous Neumann case
(zero boundary penalty in their equation~(1.5)),
\citet[Theorem~1.1]{BouchutCarstensenErn2025} states that the minimizer of
\[
 \frac a2\norm{v}_{L^2}^2+|Dv|(\Omega)-(g,v)_{L^2},
 \qquad v\in {\mathrm{BV}}(\Omega)\cap L^2(\Omega),
\]
on a bounded Lipschitz convex domain belongs to $H^1$ and satisfies
$a\norm{\nabla v}_{L^2}\le\norm{\nabla g}_{L^2}$ for $a>0$ and
$g\in H^1$. Divide \eqref{eq:pde_frozen_denoising} by $\beta$ and
omit its additive constant: $a=\alpha/\beta$ and
$g=-\bar\lambda/\beta\in H_0^1(\Omega)$ satisfy these hypotheses.
Thus $\bar z\in H^1$ and \eqref{eq:pde_optimal_control_regularity}
holds. Since the objectives agree on $H^1$, every relaxed minimizer
also solves \eqref{eq:pde_objective}.
\end{proof}

\FloatBarrier
\bibliographystyle{abbrvnat}
\bibliography{references}

\end{document}

%% file: tables/table_mesh_compact.tex
\begin{tabular}{rrrrrrrr}
\toprule
$n$ & Control dofs & Outer & Dual & IP & \shortstack{State\\ Newton} & Time (s) & $U_\tau$\\
\midrule
32 & 1089 & 2 & 6 & 28 & 5 & 0.13 & $6.45\times10^{-7}$\\
64 & 4225 & 2 & 2 & 36 & 5 & 0.66 & $4.60\times10^{-7}$\\
128 & 16641 & 2 & 4 & 47 & 5 & 3.89 & $4.70\times10^{-7}$\\
256 & 66049 & 2 & 5 & 52 & 5 & 22.41 & $4.92\times10^{-7}$\\
512 & 263169 & 2 & 3 & 47 & 5 & 116.22 & $5.03\times10^{-7}$\\
\bottomrule
\end{tabular}